\documentclass[11pt]{article}
\usepackage{amsthm, amsmath, amssymb, amsfonts, url, booktabs, tikz, setspace, fancyhdr, bm}
\usepackage{geometry}
\geometry{verbose,tmargin=2.1cm,bmargin=2.1cm,lmargin=2.3cm,rmargin=2.3cm}
\usepackage{hyperref, enumerate}
\usepackage[shortlabels]{enumitem}
\usepackage[babel]{microtype}
\usepackage[english]{babel}
\usepackage[capitalise]{cleveref}
\usepackage{comment}
\usepackage{bbm}
\usepackage{csquotes}
\usepackage{mathabx}
\usepackage{tikz}
\usepackage{graphicx}
\usepackage{float}
\usepackage{xcolor}
\usepackage{soul}
\usepackage{mathtools}
\usetikzlibrary{positioning, arrows.meta, shapes.geometric}

\counterwithin{figure}{section}


\newtheorem{theorem}{Theorem}[section]
\newtheorem{prop}[theorem]{Proposition}
\newtheorem{conj}[theorem]{Conjecture}
\newtheorem{lemma}[theorem]{Lemma}
\newtheorem{cor}[theorem]{Corollary}
\newtheorem{claim}[theorem]{Claim}

\newtheorem{construction}{Construction}

\usetikzlibrary{decorations.pathmorphing}

\theoremstyle{definition}

\newtheorem*{defn-non}{Definition}


\newlist{Case}{enumerate}{2}
\setlist[Case, 1]{%
    label           =   {\bfseries Case \arabic*.},
    labelindent=1em ,labelwidth=1.3cm, labelsep*=1em, leftmargin =!
}
\setlist[Case, 2]{%
    label           =   {\bfseries Subcase \arabic{Casei}.\arabic*.},
    labelindent=-1em ,labelwidth=1.3cm, labelsep*=1em, leftmargin =!
}

\usepackage{todonotes}


\newcommand{\ex}{\mathrm{ex}}

\title{Triple systems with bounded matching number: some constructions and exact Tur\'{a}n number} 
\author{
Nannan Chen\thanks{School of Mathematics, Shangdong University, Jinan, China, and Extremal Combinatorics and Probability Group (ECOPRO), Institute for Basic Science (IBS), Daejeon, South Korea. 
Emails: nannanchen@sdu.edu.cn, yzq\_sdu\_edu@163.com.}
\and
Miao Liu\thanks{Research Center for Mathematics and Interdisciplinary Sciences, Shandong University, Qingdao, China, and Extremal Combinatorics and Probability Group (ECOPRO), Institute for Basic Science (IBS), Daejeon, South Korea. Email: liumiao10300403@163.com.}
\and
Yuzhen Qi\footnotemark[1]
\and
Caihong Yang\thanks{School of Mathematics and Statistics, Fuzhou University, Fujian,
China, and Extremal Combinatorics and Probability Group (ECOPRO), Institute for Basic Science (IBS), Daejeon, South Korea.  Email: chyang.fzu@gmail.com.}
}
\begin{document}
\date{}
\maketitle
\begin{abstract}    
We study the Tur\'{a}n numbers of $3$-graphs avoiding $3$-graphs $F$ and $M_{s+1}^3$, a matching of size $s+1$. We disprove a conjecture of Gerbner, Tompkins, and Zhou [European Journal of Combinatorics, 2025, 127:104155] on $\ex(n,\{F,M^3_{s+1}\})$ for $3$-graph $F$ with $\chi(F)=2$ by constructing infinitely many counterexamples. For this family, we determine the asymptotic Tur\'{a}n number via edge-colored Tur\'{a}n problem. In addition, for the $3$-graph $F_{3,2}$ with edge set $\{123,145,245,345\}$, we determine the exact value of $\ex(n,\{F_{3,2}, M_{s+1}^3\})$ for every integers $s$ and all $n \ge 12s^2$.

\medskip
    \textbf{Keywords:} Hypergraph; Tur\'{a}n number; Bounded matching number.
\end{abstract}

\section{Introduction}\label{sec:introduction}
    A long line of research in extremal combinatorics, initiated by Tur\'{a}n in the 1940s, is to  determine the maximum size of an $r$-graph on $n$ vertices that contains no fixed $r$-graph $F$; that is, the \textit{Tur\'{a}n number} $\ex(n,F)$. The \textit{Tur\'{a}n density} of $F$ is $\pi( F):=\lim_{n\to \infty}\ex(n,F)/{n\choose r}$. For graphs ($r=2$), the Tur\'{a}n numbers are quite well understood when the forbidden subgraph $F$ is non-bipartite. However, there are very few results even for specific hypergraphs, let alone general results for families of hypergraphs; see \cite{Keevash11} for more details. 

    Given an $r$-graph $F$, the \textit{matching number} $\nu(F)$ is defined as the maximum number of pairwise disjoint edges in $F$. Let $M_{s+1}^r$ denote the matching of size $s+1$ with each edge being an $r$-set. In a foundational result, Erd\H{o}s and Gallai \cite{erdos_gallai1959} in 1959 determined the Tur\'{a}n number of $M_{s+1}^2$. Erd\H{o}s later extended his attention to hypergraphs, and proposed the following conjecture for general $r \geq 3$ in 1965. There are two natural families $\binom{[r(s+1)-1]}{r}$ and $\binom{[n]}{r}\setminus \binom{[n-s]}{r}$ with matching number at most $s$. Erd\H{o}s conjectured that for any $n,s,r$, one of these families achieves the maximum value. 

 \begin{conj} [Erd\H{o}s Matching Conjecture, \cite{erdos1965}] \label{conj:matching}
     Let $n, s, r$ be positive integers such that $r \geq 3$ and $n \geq r(s+1)-1$. Then 
     $$\ex(n, M_{s+1}^r)\leq \max \left\{\binom{r(s+1)-1}{r},\binom{n}{r}-\binom{n-s}{r}\right\}.$$
 \end{conj}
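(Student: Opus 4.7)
The plan is to take an extremal $r$-graph $H$ on $n$ vertices with $\nu(H)\le s$ and bound $|E(H)|$ by the claimed maximum. The first step is to apply Frankl's shifting (left-compression): this preserves $|E(H)|$ and cannot increase $\nu(H)$, so one may assume $H$ is shifted. Shifted families with $\nu(H)\le s$ are highly constrained---every edge must meet the initial segment $[r(s+1)-1]$ in a controlled fashion---which is the structural input that drives the rest of the argument.

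Next, I would invoke the $\Delta$-system (sunflower / kernel) method to pass to a large homogeneous subfamily whose edges share a common kernel of fixed intersection pattern. Such a subfamily decomposes into a union of ``stars'' over these kernels, and the constraint $\nu(H)\le s$ translates into an upper bound on both the number of kernels and the sizes of the stars. The two conjectured extremal configurations correspond precisely to the two limiting cases of this decomposition: the clique $\binom{[r(s+1)-1]}{r}$ (all kernels supported on a bounded vertex set) and the covered family $\binom{[n]}{r}\setminus\binom{[n-s]}{r}$ (a single $s$-vertex blocking set meeting every edge).

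To close the argument one splits on $n$. For $n$ close to $r(s+1)-1$, an exchange/compression argument together with double counting should show that the clique is optimal; for $n$ significantly larger than a polynomial in $r$ and $s$, one would try to upgrade the ``no $M_{s+1}^r$'' condition to ``covered by an $s$-set'' via a stability / expansion argument, after which the inequality $|E(H)|\le \binom{n}{r}-\binom{n-s}{r}$ follows immediately. Combining these two ends gives the conjectured maximum in the respective ranges of $n$.

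The main obstacle is the intermediate regime, where neither extremal family is clearly dominant and the $\Delta$-system bounds are not tight enough to separate them. This is exactly where the conjecture remains open in general: Frankl has handled $n$ sufficiently large in $r,s$ by an intricate shifting argument, and the small-$n$ end is classical, but closing the gap uniformly appears to require either a substantially stronger structural theorem for shifted matching-extremal families or a genuinely new tool beyond shifting and sunflowers. I would therefore expect any attempted full proof to stall precisely there, and would instead aim first at a sharpened version in a restricted range of $n$, with a view to identifying the structural bottleneck in the middle regime.
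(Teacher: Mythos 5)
This is the Erd\H{o}s Matching Conjecture, which the paper states as a \emph{conjecture} and does not prove; it is cited as background and remains open in full generality (the paper notes it is known for $r=3$, for $n$ large in terms of $r,s$, and in a few other regimes). Your proposal is therefore not being measured against a proof in the paper, because none exists. What you have written is an accurate survey of the standard toolkit (shifting to reduce to left-compressed families, the $\Delta$-system method to extract structured subfamilies, and a case split between the small-$n$ clique regime and the large-$n$ covered regime), and you correctly identify that none of these tools closes the intermediate regime where neither $\binom{r(s+1)-1}{r}$ nor $\binom{n}{r}-\binom{n-s}{r}$ clearly dominates. That honest acknowledgment is the right conclusion: this is not a gap in your argument so much as the genuine open problem. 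A minor caution on one step you state as routine: shifting does preserve the edge count and cannot increase the matching number, but the claim that shifted families with $\nu\le s$ have ``every edge meeting $[r(s+1)-1]$ in a controlled fashion'' needs to be made precise (the relevant fact is that in a shifted family with $\nu\le s$, every edge meets $[r(s+1)-1]$, but the stronger structural control you hint at is exactly where the known proofs for large $n$ do real work). In short, your proposal does not and cannot prove the statement, because the statement is an open conjecture; you correctly diagnose where the approach stalls.
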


\noindent The case $s=1$ is the classical Erd\H{o}s-Ko-Rado theorem \cite{EKR1961}. For $r=3$, the conjecture was settled  through a series of works \cite{frankl2017, frankl_rodl_rucinski2012, luczak_mieczkowska2014}. For general $r$, Erd\H{o}s \cite{erdos1965} himself proved the conjecture for $n > n_0(r, s)$. For more developments and partial results on this conjecture, we refer the reader to \cite{frankl2017proof,frankl2022erdHos,huang2012size}.

Motivated by \cref{conj:matching}, Tur\'an-type problems for $r$-graphs under matching constraints have been extensively studied. In the graph case ($r=2$), Alon and Frankl \cite{alon2024turan} determined the exact value $\ex(n,\{K_t,M^2_{s+1}\})$ for $n\ge 2s+1$ and $\ex(n,\{F,M^2_{s+1}\})$ for $n\ge n_0(s)$ when $F$ is a color-critical graph and $\chi(F)\geq 3$. Subsequently, Gerbner \cite{gerbner2024turan} extended the latter result to arbitrary graphs $F$. For $r\geq 3$, Gerbner, Tompkins and Zhou \cite{gerbner2025} investigated $\ex(n,\{F,M^r_{s+1}\})$ for all $r$-graphs $F$ with $\chi(F)\geq 3$. Moreover, in the $2$-chromatic case, they established general lower and upper bounds for $\ex\left(n,\left\{F, M_{s+1}^r\right\}\right)$ and resolved several special cases, as summarized below. 

Before stating the theorem, we record some notation. Let $F$ be an $r$-uniform hypergraph with $\chi(F)=2$. Write $p(F)$ for the minimum number of red vertices over all proper red-blue colorings of $F$. A \emph{strong red–blue coloring} of $F$ is a proper red–blue coloring in which every edge contains exactly one red vertex. Let $q(F)$ denote the minimum number of red vertices over all strong red–blue colorings of $F$, and set $q(F)=\infty$ if no such coloring exists. Clearly, $p(F)\le q(F)$.

\begin{theorem}[\cite{gerbner2025}]\label{thm:gerbner}
Let $F$ be an $n$-vertex $r$-graph with  $\chi(F)=2$ and $n \geq(2 s+1) r-s$.  
Then, 
\begin{itemize}
    \item if $s<\min \{p(F), r\}$, then $\ex\left(n,\left\{F, M_{s+1}^r\right\}\right)=\sum_{i=1}^s\binom{s}{i}\binom{n-s}{r-i}$. 
    \item If $s<q(F)$, then $\ex\left(n,\left\{F, M_{s+1}^r\right\}\right)=s\binom{n-s}{r-1}+O\left(n^{r-2}\right)$. 
\end{itemize}
\end{theorem}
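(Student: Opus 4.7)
The plan is to prove the two bullets separately; each has a clean lower-bound construction paired with an upper-bound argument.

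For the first bullet ($s<\min\{p(F),r\}$), the lower bound is the star-cover family $\C{S}$: fix an $s$-set $S\subseteq [n]$ and take every $r$-set meeting $S$, which gives $|\C{S}|=\binom{n}{r}-\binom{n-s}{r}=\sum_{i=1}^{s}\binom{s}{i}\binom{n-s}{r-i}$ edges with $\nu(\C{S})\le s$. To verify $\C{S}$ is $F$-free I would color $S$ red and the rest blue: in any embedded copy of $F$, every edge contains at least one red vertex (by definition of $\C{S}$) and, since $r>s$, at least one blue vertex, producing a proper $2$-coloring of $F$ with at most $s$ red vertices, which contradicts $p(F)>s$. The upper bound is then immediate from $\ex(n,\{F,M_{s+1}^r\})\le \ex(n,M_{s+1}^r) = \binom{n}{r}-\binom{n-s}{r}$ for $n\ge(2s+1)r-s$, by Frankl's theorem on the Erd\H{o}s matching conjecture.

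For the second bullet ($s<q(F)$), the lower bound uses the refined ``exactly-one'' family $\C{T}$: take every $r$-set meeting a fixed $s$-set in exactly one vertex, giving $|\C{T}|=s\binom{n-s}{r-1}$ and $\nu\le s$. Any embedded copy of $F$ in $\C{T}$ would yield a strong red-blue coloring with at most $s$ red vertices, contradicting $q(F)>s$. For the upper bound, observe that $\binom{n}{r}-\binom{n-s}{r}=s\binom{n-s}{r-1}+\Theta(n^{r-2})$, so the target $s\binom{n-s}{r-1}+O(n^{r-2})$ genuinely beats EMC by a $\Theta(n^{r-2})$ term, forcing the $F$-freeness to enter substantively. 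My plan: given $\{F,M_{s+1}^r\}$-free $H$, fix a maximum matching $M=\{e_1,\ldots,e_t\}$ of size $t\le s$ on vertex set $U=V(M)$; every edge of $H$ meets $U$. Edges with $|e\cap U|\ge 2$ contribute $O(n^{r-2})$, so it suffices to control the ``light'' edges with $|e\cap U|=1$. For each $u\in U$ define the restricted link $L(u)\subseteq\binom{V\setminus U}{r-1}$ so that the number of light edges equals $\sum_{u\in U}|L(u)|$; I would then aim to prove, for each matching edge $e_i$, the local inequality $\sum_{u\in e_i}|L(u)|\le \binom{n-s}{r-1}+O(n^{r-2})$. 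Summed over the $t\le s$ matching edges, this yields the desired upper bound.

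The hard part will be establishing this local inequality, which is where $s<q(F)$ is invoked. If it failed for some $e_i$, then $\omega(n^{r-2})$ many $(r-1)$-sets $R\subseteq V\setminus U$ would lie in at least two of the links $L(u)$ for $u\in e_i$, producing a large ``co-link book'' family. My plan is a sunflower-type extraction to pass to a sub-family sharing a common kernel, and then use the matching $M$ as scaffolding to embed a copy of $F$ in $H$ in such a way that each embedded edge has a unique vertex in a chosen $s$-subset of $U$ (the $r-1$ remaining vertices coming from the sunflower petals and kernel). The resulting embedding would induce a strong red-blue $2$-coloring of $F$ with at most $s$ reds, contradicting $q(F)>s$. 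Ensuring that this embedding is injective and faithful to $F$'s adjacency structure is where the hypothesis $n\ge(2s+1)r-s$ enters, by guaranteeing enough room in $V\setminus U$ for the required disjoint choices.
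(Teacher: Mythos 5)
The paper states \cref{thm:gerbner} as a cited result of Gerbner, Tompkins, and Zhou and does not reprove it, so your argument has to be assessed on its own.

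Your first bullet is correct and takes the natural route: the star family $\mathcal{S}$ has exactly $\sum_{i=1}^{s}\binom{s}{i}\binom{n-s}{r-i}=\binom{n}{r}-\binom{n-s}{r}$ edges and $\nu\le s$; the hypotheses $p(F)>s$ and $r>s$ are exactly what the induced red--blue coloring argument needs to rule out a copy of $F$; and Frankl's resolution of the Erd\H{o}s Matching Conjecture for $n\ge(2s+1)r-s$ supplies the matching upper bound. The lower bound in your second bullet (the exactly-one family $\mathcal{T}$, together with $q(F)>s$) is likewise correct.

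The gap is in the upper bound of the second bullet, and it comes from a misreading of the big-$O$. You observe $\binom{n}{r}-\binom{n-s}{r}=s\binom{n-s}{r-1}+\Theta(n^{r-2})$ and infer that the target $s\binom{n-s}{r-1}+O(n^{r-2})$ ``genuinely beats EMC,'' so that $F$-freeness must do real work in the upper bound. It does not: a quantity that is $\Theta(n^{r-2})$ is in particular $O(n^{r-2})$, so the bound from the matching constraint alone,
\[
\ex\bigl(n,\{F,M_{s+1}^r\}\bigr)\ \le\ \ex\bigl(n,M_{s+1}^r\bigr)\ =\ \binom{n}{r}-\binom{n-s}{r}\ =\ s\binom{n-s}{r-1}+\sum_{i=2}^{s}\binom{s}{i}\binom{n-s}{r-i}\ =\ s\binom{n-s}{r-1}+O(n^{r-2}),
\]
already has precisely the claimed form. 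Paired with the lower bound $\ge s\binom{n-s}{r-1}$, this finishes the second bullet with no further work. The maximum-matching scaffolding, the per-edge local inequality, and the sunflower/embedding step are therefore unnecessary for this statement — and as written they also constitute an incomplete argument, since you explicitly leave the ``hard part'' as a plan. The hypothesis $s<q(F)$ enters only in verifying $F$-freeness of the lower-bound construction, exactly as $s<\min\{p(F),r\}$ does in the first bullet.
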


For a $3$-graph $F$ with $\chi(F)=2$ and $s\ge q(F)$, Gerbner, Tompkins, and Zhou~\cite{gerbner2025} introduced \cref{cons:conj} below, which is $F$-free and has matching number at most $s$, and on this basis posed Conjecture \ref{conj1}. We first recall the construction underlying the conjecture and fix notation. For every $v\in V(F)$, the \emph{link graph} of $v$ in $F$ is the graph $L_F(v)$ on $V(F)\setminus\{v\}$ whose edges are precisely those pairs $\{x,y\}$ with $vxy\in E(F)$. The \emph{Tur\'an graph} $T(n,t)$ is the complete $t$-partite graph on $[n]$ whose part sizes differ by at most $1$; by Tur\'an's theorem it is (up to isomorphism) the unique extremal $K_{t+1}$-free graph on $n$ vertices. 
We write $t(n,t):=e(T(n,t))$ for its number of edges. 

\begin{construction}[\cite{gerbner2025}]\label{cons:conj}
Let $F$ be a $3$-graph with $\chi(F)=2$ and $q(F)\le s$. Order the vertices of $F$ as $1,2,\dots,|V(F)|$ so that
$$\chi\!\big(L_F(1)\big)\ \ge\ \chi\!\big(L_F(2)\big)\ \ge\ \cdots\ \ge\ \chi\!\big(L_F(|V(F)|)\big).$$
For each $i\in[q(F)]$, set $\ell_i:=\chi(L_F(i))$ and define a $3$-graph $H_i$ as follows. Let $A_1,A_2,B$ be pairwise disjoint with $|A_1|=i-1$, $|A_2|=s-(i-1)$ and $|B|=n-s$. Place a copy of the Tur\'an graph $T(n-s,\ell_i-1)$ in $B$. Finally, include all hyperedges of the following two types:
\begin{enumerate}
\item[$(1)$] edges containing one vertex from $A_1$ together with two vertices from $B$; \label{cons:type1}
\item[$(2)$] edges containing one vertex from $A_2$ together with an edge of $T(n-s,\ell_i-1)$ lying in $B$. \label{cons:type2}
\end{enumerate}
\end{construction}

\noindent
We record the basic properties of $H_i$. First, every edge of $H_i$ meets $A_1\cup A_2$, so $\nu(H_i)\le |A_1|+|A_2|=s$. Next, when $\ell_i>2$, the link graph of any vertex $a\in A_2$ is exactly $T(n-s,\ell_i-1)$ and hence $(\ell_i-1)$-partite; the link of any $b\in B$ is bipartite (it consists of all pairs joining $A_1$ to $B$ together with those joining $A_2$ to the parts of $B$ distinct from the part of $b$). In contrast, for any $a\in A_1$, its link graph induced by $B$ is complete (see edges of type (1)), and thus has chromatic number $|B|$. Consequently, in $H_i$ there are at most $i-1$ vertices (namely, those in $A_1$) whose link graph has chromatic number at least $\ell_i$, while every other vertex has link chromatic number at most $\ell_i-1$. Since $F$ contains at least $i$ vertices whose link chromatic numbers are $\ge \ell_i$ by construction of the ordering, $H_i$ is $F$-free. If $\ell_i=2$, then $T(n-s,1)$ has no edges, so type (2) contributes no hyperedges inside $A_2\cup B$. In this case, coloring all vertices of $A_1$ red and all vertices of $A_2\cup B$ blue yields a strong red–blue coloring with fewer than $q(F)$ red vertices. This again forbids a copy of $F$ in $H_i$. Thus, for each $i\in[q(F)]$, the hypergraph $H_i$ is $F$-free and satisfies $\nu(H_i)\le s$. These observations naturally lead to the following conjecture. 

\begin{conj}[\cite{gerbner2025}]\label{conj1}
Let $F$ be a $3$-graph with $\chi(F)=2$. If $q(F)\le s$ and $n$ is sufficiently large, then
\[
\ex\bigl(n,\{F,M_{s+1}^3\}\bigr)
=\max\{\,|E(H_i)|: i\in[q(F)]\,\}+o(n^2).
\]
\end{conj}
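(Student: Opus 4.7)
My plan is to split the bound into lower and upper parts matching up to $o(n^2)$. The lower bound is immediate from \cref{cons:conj}: the maximum $|E(H_i)|$ over $i\in[q(F)]$ is realized by an admissible construction. All the work lies in the matching upper bound.

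Let $H$ be an extremal $n$-vertex $\{F,M_{s+1}^3\}$-free $3$-graph. Since $\nu(H)\le s$, the endpoints of a maximum matching form a vertex cover $T$ of size at most $3s$. Using a standard defect/deletion step (removing the $O(n)$ edges that have two or three vertices in the cover, and discarding vertices whose link has too few edges) I would reduce to the situation in which $H$ is essentially supported on edges meeting a cover $S\subseteq T$ of size exactly $s$ in precisely one vertex, so that
\[
e(H)=\sum_{v\in S} e(L_v)+o(n^2),
\]
where $L_v$ denotes the link graph of $v$ restricted to $[n]\setminus S$.

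The central step is then to bound $\sum_{v\in S} e(L_v)$ using $F$-freeness. Mirroring the analysis of $H_i$ in \cref{cons:conj}, one should show that for each $i\in[q(F)]$ at most $i-1$ of the vertices in $S$ can have link chromatic number $\ge \ell_i$, since otherwise combining $i$ such cover vertices with an appropriate $(\ell_i-1)$-chromatic common subgraph on $[n]\setminus S$ embeds $F$ by following the ordering $\chi(L_F(1))\ge \chi(L_F(2))\ge \dots$. A Zykov-style symmetrization then replaces each $L_v$ by a complete Tur\'an graph on $[n]\setminus S$ with the appropriate number of parts without decreasing $e(H)$, and the admissible vectors of link-chromatic numbers are exactly those realized by partitioning $S=A_1\cup A_2$ as in \cref{cons:conj}. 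Optimizing over the index $i\in[q(F)]$ then produces the claimed $\max_i|E(H_i)|+o(n^2)$.

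The hard part will be showing that symmetrizing and \emph{aligning} the Tur\'an partitions across different $v\in S$ really loses nothing: a priori, distinct links $L_v$ may each be close to Tur\'an graphs but with \emph{different} partitions of $[n]\setminus S$, and their interplay could boost $\sum_v e(L_v)$ beyond $\max_i|E(H_i)|$ while still avoiding $F$. In view of the abstract's announcement of counterexamples and of an asymptotic formula governed by an edge-coloured Tur\'an problem, I would expect precisely this step to fail: allowing distinct Tur\'an partitions at different cover vertices yields strictly more edges than any single $H_i$. Thus the plan above captures the correct framework for the upper bound, but the identity in the conjecture should be replaced by a supremum over a richer family of link-partition assignments, which is presumably the form of the asymptotic result the authors establish.
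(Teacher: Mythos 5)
You correctly sense from the abstract that this statement is \emph{not} a theorem to be proved but a conjecture the paper disproves, and the general framework you sketch (a cover of size $O(s)$, reduction to link graphs, lower bound from the $H_i$ of \cref{cons:conj}) is sound and is indeed the framework the paper uses for its asymptotic results. Where your diagnosis goes wrong is the proposed mechanism of failure. You speculate that the breakdown occurs when different cover vertices carry near-Tur\'an links with \emph{misaligned} partitions of $[n]\setminus S$, so that the correct answer should be a supremum over a ``richer family of link-partition assignments.'' That is not what happens. In the paper's counterexample (Construction~B for $F=F_{\mathcal{P},t}$, $t\ge 4$, $s>t-1$), all $s$ cover vertices share \emph{one and the same} Tur\'an partition $T(n-s,t-1)$ of $B$ — exactly the aligned configuration your symmetrization step assumes is safe.

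The real flaw is upstream of alignment. For $F_{\mathcal{P},t}$ every vertex has a bipartite link, so $\ell_i=2$ for all $i$, the $H_i$ of \cref{cons:conj} contain no type-(2) edges, and $\max_i|E(H_i)|=(t-2)\binom{n-s}{2}+o(n^2)$. Construction~B, with $s\cdot t(n-s,t-1)=s\cdot\frac{t-2}{t-1}\binom{n-s}{2}+o(n^2)$ edges, exceeds this for $s>t-1$ and is nevertheless $F_{\mathcal{P},t}$-free: the shadow of $F_{\mathcal{P},t}$ on its $[t]$-side is a clique $K_t$, which cannot embed into the $(t-1)$-partite shadow on $B$. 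So the conjecture's error is the implicit assumption that the embeddability of $F$ is governed purely by the multiset of link chromatic numbers; it ignores how the links of $F$ must overlap (here, a forced $K_t$ in the shadow). The paper's corrected asymptotic, $\ex(n,\{F_{\mathcal{P},t},M_{s+1}^3\})=s\cdot t(n-s,t-1)+O(n)$ for $s\ge t-1$, is obtained via the edge-coloured Tur\'an result \cref{thm:star colored K_r-free}, which is precisely the tool that captures this extra gluing constraint — not a max over several misaligned $H_i$-type constructions.
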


\subsection{Counterexamples and edge-colored Tur\'an}  

In this paper, we construct an infinite family of $3$-graphs that refutes Conjecture~\ref{conj1}. Furthermore, we determine the correct asymptotic Tur\'{a}n numbers for this family.

\begin{construction}\label{cons:conj2}
Let $t\ge 3$ and let $K_t$ be the complete graph on vertex set $[t]$. Partition $E(K_t)$ into $t-1$ stars
\[
\mathcal{P}:=\{S_1,S_2,\dots,S_{t-1}\},
\]
where for each $i\in[t-1]$, the star $S_i$ consists of all edges $ij$ with $j>i$. Define the bipartite $3$-graph $F_{\mathcal{P},t}$ on the vertex set $A\cup [t]$, where $A=\{a_1,\dots,a_{t-1}\}$ is disjoint from $[t]$, by
\[
E(F_{\mathcal{P},t})=\bigl\{\,a_iij:\ a_i\in A\ \text{and}\ ij\in E(S_i)\ \text{for}\ i\in[t-1]\,\bigr\}.
\]
\end{construction}

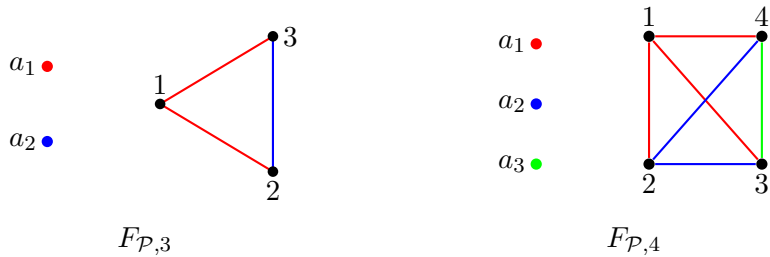
\begin{figure}[H]
\centering
\begin{tikzpicture}
\label{figure-star-partition}

\node[circle,fill=red,inner sep=1.5pt] (redpoint) at (0,1) {};
\node[circle,fill=blue,inner sep=1.5pt] (bluepoint) at (0,0) {};

\coordinate (A) at (1.5,0.5);
\coordinate (B) at (3,-0.4);
\coordinate (C) at (3,1.4);

\draw[red,thick] (A) -- (B);
\draw[red,thick] (A) -- (C);
\draw[blue,thick] (B) -- (C);

\fill[black] (A) circle (2pt);
\fill[black] (B) circle (2pt);
\fill[black] (C) circle (2pt);

\node[left] at (redpoint) {$a_1$};
\node[left] at (bluepoint) {$a_2$};
\node[above] at (A) {$1$};
\node[below] at (B) {$2$};
\node[right] at (C) {$3$};

\node[below] at (1.3,-1) {$F_{\mathcal{P},3}$};

\node[circle,fill=blue,inner sep=1.5pt] (bluepoint) at (6.5,0.5) {};
\node[circle,fill=green,inner sep=1.5pt] (greenpoint) at (6.5,-0.3) {};
\node[circle,fill=red,inner sep=1.5pt] (redpoint) at (6.5,1.3) {};

\node[circle,fill=black,inner sep=1.5pt] (D) at (9.5,1.4) {};
\node[circle,fill=black,inner sep=1.5pt] (C) at (9.5,-0.3) {};
\node[circle,fill=black,inner sep=1.5pt] (A) at (8,1.4) {};
\node[circle,fill=black,inner sep=1.5pt] (B) at (8,-0.3) {};


\draw[red,thick] (A) -- (B);
\draw[red,thick] (A) -- (C);
\draw[red,thick] (A) -- (D);
\draw[blue,thick] (B) -- (C);
\draw[blue,thick] (B) -- (D);
\draw[green,thick] (C) -- (D);

\node[left] at (redpoint) {$a_1$};
\node[left] at (bluepoint) {$a_2$};
\node[left] at (greenpoint) {$a_3$};
\node[above] at (A) {$1$};
\node[below] at (B) {$2$};
\node[below] at (C) {$3$};
\node[above] at (D) {$4$};

\node[below] at (7.8,-1) {$F_{\mathcal{P},4}$}; 
\end{tikzpicture}
\caption{Bipartite $3$-graphs $F_{\mathcal{P}, 3}$ and $ F_{\mathcal{P}, 4}$. The red, blue and green edges belong to the link graph of red, blue and green vertices, respectively. }
\end{figure}
Note that $F_{\mathcal{P},3}=F_5$ when $t=3$, see Figure~\ref{figure-star-partition}. While the exact value of $\ex(n,\{F_5,M_{s+1}^3\}\bigr)$ was determined in~\cite{wang2025}, the cases $t\ge 4$ remain open. Our first result determines the asymptotic behavior of $\ex\bigl(n,\{F_{\mathcal{P},t},M_{s+1}^3\}\bigr)$ for every $t\ge 4$, thereby providing infinite counterexamples to Conjecture \ref{conj1}.  

\begin{theorem}\label{thm:counter}
For each $s\geq 1$, $t\geq 4$ and sufficiently large $n$,  
$$\ex(n, \{F_{\mathcal{P}, t},M_{s+1}^3\}) =
\begin{cases}
s\binom{n-s}{2} + O(n), & \text{if } s \leq t-2,\\
s\cdot t(n-s, t-1) + O(n), & \text{if } s \geq t-1.
\end{cases}$$
\end{theorem}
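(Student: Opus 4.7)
My plan splits into verifying the two lower-bound constructions and establishing a matching upper bound via an edge-coloured Tur\'an estimate.

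For the lower bound, when $s\le t-2$ I take a set $A$ of size $s$ and include every triple meeting $A$ in exactly one vertex, yielding $s\binom{n-s}{2}$ edges with matching number $s$; when $s\ge t-1$ I take $A$ of size $s$, place a copy of $T(n-s,t-1)$ on $B:=V\setminus A$, and include every triple $\{a,y,z\}$ with $a\in A$ and $yz\in E(T(n-s,t-1))$, giving $s\cdot t(n-s,t-1)$ edges with matching number $s$. In both constructions every edge meets $A$ in exactly one vertex, so in any embedding of $F_{\mathcal{P},t}$ the vertices sent into $A$ would form an exact transversal of $F_{\mathcal{P},t}$. A short case analysis identifies the exact transversals of $F_{\mathcal{P},t}$ as $\{a_1,\dots,a_{t-1}\}$ and $\{1,a_2,\dots,a_{t-1}\}$, both of size $t-1$, and shows that in either case the remaining $t$ image vertices would have to realise $K_t$ in the shadow graph on $B$. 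When $s\le t-2$ the size $t-1>|A|$ already rules this out, while for $s\ge t-1$ the $K_t$ cannot fit inside $T(n-s,t-1)$.

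For the upper bound, let $H$ be $\{F_{\mathcal{P},t},M_{s+1}^3\}$-free. A maximum matching $M\subseteq E(H)$ gives a cover $T:=V(M)$ of size at most $3s$, and edges meeting $T$ in at least two vertices contribute only $O(n)$. The remaining ``pure'' edges total $\sum_{v\in T}e(L_v')$, where $L_v':=L_v[V\setminus T]$. A greedy matching argument shows that the heavy cover set $T':=\{v\in T:e(L_v')\ge Cn\}$ (for a large $C=C(s,t)$) satisfies $|T'|\le s$, since $s+1$ heavy vertices would permit building $s+1$ vertex-disjoint triples, contradicting $\nu(H)\le s$; the light cover vertices contribute a further $O(n)$. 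In the easy case $s\le t-2$, the trivial bound $e(L_v')\le\binom{n-s}{2}+O(n)$ together with $|T'|\le s$ already gives $s\binom{n-s}{2}+O(n)$. For $s\ge t-1$ I reduce to an edge-coloured Tur\'an statement by viewing the pure edges as a multi-colouring $c:\binom{V\setminus T}{2}\to 2^{T'}$ with $c(yz):=\{v\in T':vyz\in E(H)\}$; then $\sum_{v\in T'}e(L_v')=\sum_{yz}|c(yz)|$, and $F_{\mathcal{P},t}$-freeness forbids any $t$-set $U\subseteq V\setminus T$ admitting an ordering $u_1,\dots,u_t$ together with distinct $v_1,\dots,v_{t-1}\in T'$ satisfying $v_i\in c(u_iu_j)$ for all $1\le i<j\le t$ (an ``upper-triangular'' pattern). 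The uniform colouring in which every colour class equals $T(n-s,t-1)$ realises $\sum|c(yz)|=s\cdot t(n-s,t-1)$ and is $F_{\mathcal{P},t}$-free; the plan is to show this is essentially optimal by arguing, via supersaturation, that exceeding $s\cdot t(n-s,t-1)+\Omega(n^2)$ produces a common vertex set $U$ with many $K_t$'s in each heavy colour class, and then invoking a Hall-type selection to extract distinct apex colours realising the upper-triangular pattern.

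The main obstacle is the edge-coloured Tur\'an estimate itself: a $K_t$ in the union of heavy links is not enough, since one must simultaneously select $t-1$ distinct apex colours lining up with the rotational star decomposition of $K_t$. Handling this distinctness constraint via Hall's theorem while still harvesting the full Tur\'an savings from each colour class is precisely what Conjecture~\ref{conj1} overlooks, and it explains why the single-apex hypergraphs $H_i$ of Construction~\ref{cons:conj} are dominated by the layered Construction~\ref{cons:conj2}.
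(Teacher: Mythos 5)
Your overall strategy mirrors the paper's: the same two lower-bound constructions, a cleaning step to isolate a set of $O(s)$ heavy ``apex'' vertices, and a reduction of the cross edges to a multi-colored Tur\'an problem. Your lower-bound verifications are correct (your ``exact transversal'' phrasing is just a repackaging of Proposition~\ref{prop:q(F)} and of the shadow argument for Construction~B), and your identification of the forbidden ``upper-triangular'' pattern --- the ordered $K_t$ with $v_i\in c(u_iu_j)$ for $i<j$ and $v_1,\dots,v_{t-1}$ distinct --- is in fact a sharper description than the paper's phrase ``$(t-1)$-star edge-colored $K_t$-free'', and it is exactly the configuration the paper's proof of Theorem~\ref{thm:star colored K_r-free} actually produces. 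The $s\le t-2$ case is also handled correctly: the trivial bound $\sum_{v\in T'}e(L'_v)\le |T'|\binom{n-|T|}{2}$ suffices.

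The gap is the edge-colored Tur\'an estimate itself, which you explicitly flag but do not prove, and your proposed ``supersaturation plus Hall'' route does not currently close it. Supersaturation applied to a single heavy color class $G_v$ yields $\Omega(n^t)$ monochromatic $K_t$'s in that one color, but you need a single $t$-set $U$ carrying the \emph{multi}-colored upper-triangular structure, where the $i$-th ``row'' $u_iu_{i+1},\dots,u_iu_t$ lies in $G_{v_i}$ for $t-1$ pairwise distinct apexes. Averaging over colors does not give a common clique, and a Hall-type selection requires, for each $u_i$, a large set of colors appearing on all of $u_iu_{i+1},\dots,u_iu_t$ simultaneously --- a property that supersaturation in a single color does not deliver. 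The paper instead proves Theorem~\ref{thm:star colored K_r-free} by (i) averaging over $(s-1)$-element subsets of colors to reduce to the base case $s=r-1$; (ii) a Zykov-type vertex-deletion to force minimum multi-degree at least $(r-1)\bigl(n-\lceil n/(r-1)\rceil\bigr)+r-1$; (iii) observing that the edge count forces some pair $v_1v_2$ of full multiplicity $r-1$; and (iv) greedily appending $v_3,\dots,v_r$ using a counting inequality that guarantees, at step $i$, at least $r-i+1$ colors present on \emph{all} of $v_iv_1,\dots,v_iv_{i-1}$, so that distinct apex colors can be assigned to the stars $S_r,\dots,S_3$. This last step is precisely the mechanism that resolves the distinctness constraint you describe as the main obstacle, and until some version of it (or a worked-out alternative) is supplied, the upper bound in the $s\ge t-1$ regime is unproven.
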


We prove this theorem by invoking an edge-colored Tur\'an-type result stated as follows.

\begin{theorem}\label{thm:star colored K_r-free}
    For $s\ge 1$, $r \geq 3$ and $n> r^3$, let $G_1,G_2,\cdots,G_s\subseteq \binom{[n]}{2}$ be $(r-1)$-star edge-colored $K_r$-free. Then $$\sum_{i=1}^{s}e(G_i)\leq
    \begin{cases}
        s\binom{n}{2}, & \text{if } s \leq r-2,\\ 
        s \cdot t(n,r-1) + sn, & \text{if } s \geq r-1.
    \end{cases}
    $$ 
\end{theorem}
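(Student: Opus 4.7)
The plan is to dispose of the case $s\le r-2$ trivially via $e(G_i)\le\binom{n}{2}$, giving $\sum_i e(G_i)\le s\binom{n}{2}$. The substantive case is $s\ge r-1$, which I would attack by induction on $n$, with base case at small $n$ where the bound is immediate from the trivial count.

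For the inductive step, let $v^*$ be a vertex of minimum total multi-degree $D:=\sum_{i=1}^s d_{G_i}(v^*)$; note that $t(n,r-1)-t(n-1,r-1)$ equals the degree of a vertex in a balanced Tur\'an graph $T(n,r-1)$. If $D\le s[t(n,r-1)-t(n-1,r-1)]+s$, then the graphs $G_1-v^*,\dots,G_s-v^*$ on $n-1$ vertices remain $(r-1)$-star edge-colored $K_r$-free, so the inductive hypothesis yields
\[
\sum_i e(G_i)=D+\sum_i e(G_i-v^*)\le s[t(n,r-1)-t(n-1,r-1)]+s+s\,t(n-1,r-1)+s(n-1)=s\,t(n,r-1)+sn,
\]
closing the step. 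Thus the interesting regime is when \emph{every} vertex has total multi-degree strictly greater than $s[t(n,r-1)-t(n-1,r-1)]+s$, meaning that on average over colors each vertex has color-degree exceeding the Tur\'an threshold $(r-2)(n-1)/(r-1)$.

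The plan for this remaining regime is to exhibit the forbidden $(r-1)$-star edge-colored $K_r$ and thereby contradict the hypothesis. Using the high multi-degrees together with the slack from $n>r^3$, I would first locate, via a multi-colored Tur\'an-type supersaturation argument, a copy of $K_r$ on some $\{x_1,\dots,x_r\}\subseteq[n]$ whose edges each appear in at least $r-1$ of the color classes $G_1,\dots,G_s$. I would then apply a Hall-type system-of-distinct-representatives argument to the stars $S_1,\dots,S_{r-1}$ of a suitable vertex-ordering, with compatible color sets $A_j:=\bigcap_{e\in S_j}\{i:e\in G_i\}$, to select distinct indices $i_1,\dots,i_{r-1}\in[s]$ with $S_j\subseteq G_{i_j}$ for all $j$, yielding the forbidden structure. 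The main obstacle is this SDR step: the stars have strictly decreasing sizes $r-1,r-2,\dots,1$, so the compatible color set of the longest star $S_1$ can be the smallest, and verifying Hall's condition against stars of very different sizes requires a careful case analysis of how the edge color-lists intersect over a color-rich $K_r$---the hypothesis $n>r^3$ being what supplies enough slack in both the supersaturation step (to find such a color-rich $K_r$) and the subsequent SDR selection.
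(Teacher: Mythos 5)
Your handling of the trivial case $s\le r-2$, and the vertex-deletion step that reduces to a minimum multi-degree condition, are both fine and in the same spirit as the paper. The paper differs structurally in one preliminary move that you skip: before any deletion, it reduces from general $s\ge r-1$ to the single base case $s=r-1$ by averaging over all $(s-1)$-element subsets of the color set. This reduction is not merely cosmetic — it is what makes the later combinatorics tractable, because when $s=r-1$ an edge of full multiplicity carries \emph{every} color.

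The genuine gap is in your plan for the remaining regime. You propose to locate a $K_r$ whose every edge has multiplicity at least $r-1$ and then run an SDR argument on the star color-lists. Neither half of this works. First, such a $K_r$ need not exist under the hypothesis. Take $r=3$, $s=2$: the condition $e(G_1)+e(G_2)>2t(n,2)+2n$ forces $e(G_1\cap G_2)\ge 2t(n,2)+2n-\binom{n}{2}$, which is only linear in $n$, far below the Mantel threshold; so $G_1\cap G_2$ can easily be triangle-free, and no $K_3$ with both edges in both colors need exist. (The slack in the theorem is only $sn$, not $\varepsilon n^2$, so a supersaturation argument for the \emph{color-rich} $K_r$ does not get off the ground; $n>r^3$ is used in the paper only to terminate the deletion process, not to power supersaturation.) Second, even granted such a $K_r$, when $s>r-1$ the $(r-1)$-element color-lists of different edges can be near-disjoint subsets of $[s]$, so the intersections $A_j$ over the stars can be empty and Hall's condition fails; you acknowledge this but offer no fix.

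What the paper does instead — and what your plan is missing — is to \emph{build} the $K_r$ incrementally so that common colors on stars are guaranteed by construction, rather than searching for a color-rich $K_r$ and hoping for an SDR. With $s=r-1$, the edge count forces an edge $v_1v_2$ of full multiplicity $r-1$. The minimum multi-degree bound then lets one pick $v_3,\dots,v_r$ one at a time so that $\sum_{j<i} w(v_iv_j)\ge (i-2)(r-1)+(r-i+1)$; since each edge has multiplicity at most $r-1$, a pigeonhole count shows the star $S_i=\{v_iv_j : j<i\}$ has at least $r-i+1$ colors common to all its edges. The longest star has at least one common color, the next at least two, and so on, and $v_1v_2$ has all $r-1$; a greedy assignment from the longest star to the shortest then immediately produces an $(r-1)$-star edge-colored $K_r$ with distinct colors on the stars. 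This controlled construction is the missing key idea; without it, your SDR step has no way to verify Hall's condition.
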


We remark that Theorem \ref{thm:star colored K_r-free} is tight up to the additive term $sn$. Moreover, in \Cref{sec:concluding remarks}, we propose a conjecture that this term can be removed, which would yield the exact extremal value in the second regime.

\begin{cor}\label{cor:conterexample}
There exists an infinite family of $3$-graphs that violates Conjecture \ref{conj1}. 
\end{cor}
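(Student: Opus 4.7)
The plan is to exhibit the sequence $\{F_{\mathcal{P},t}: t\ge 4\}$, paired with a suitable $s\ge t$, as infinitely many counterexamples by comparing the actual asymptotic Tur\'an number supplied by \cref{thm:counter} with the value predicted by \cref{cons:conj}.

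The first step is to evaluate the conjectural quantity $\max_{i\in[q(F)]}|E(H_i)|$ for $F=F_{\mathcal{P},t}$. A direct inspection shows that every link graph of $F_{\mathcal{P},t}$ is bipartite: for $v=a_i\in A$ the link is a star within $[t]$, and for $v=k\in[t]$ the link is a subgraph of the complete bipartite graph between $A$ and $[t]$. Therefore $\ell_i=\chi(L_{F}(i))=2$ for every $i\in[q(F_{\mathcal{P},t})]$. Moreover, coloring all vertices of $A$ red and all of $[t]$ blue is a strong red-blue coloring with $t-1$ red vertices, yielding $q(F_{\mathcal{P},t})\le t-1$. With $\ell_i=2$ the Tur\'an graph $T(n-s,1)$ in \cref{cons:conj} is edgeless, so only the type~(1) hyperedges survive, giving $|E(H_i)|=(i-1)\binom{n-s}{2}$. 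Consequently, $\max_{i\in[q(F)]}|E(H_i)|=(q(F)-1)\binom{n-s}{2}\le (t-2)\binom{n-s}{2}$, and \cref{conj1} predicts at most $(t-2)\binom{n-s}{2}+o(n^2)$.

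The second step is to compare this prediction with \cref{thm:counter}, which gives for $s\ge t-1$
\[
\ex\bigl(n,\{F_{\mathcal{P},t},M_{s+1}^3\}\bigr)=s\cdot t(n-s,t-1)+O(n)=\frac{s(t-2)}{t-1}\binom{n-s}{2}\bigl(1+o(1)\bigr).
\]
The ratio of the true asymptotic to the conjectural prediction is $s/(t-1)$, which strictly exceeds $1$ precisely when $s\ge t$. Choosing $s=t$ for each $t\ge 4$ therefore produces a gap of order $\Theta(n^2)$, far beyond the $o(n^2)$ slack permitted in the conjecture. Since the hypergraphs $F_{\mathcal{P},t}$ are pairwise non-isomorphic (each on $2t-1$ vertices), this supplies the desired infinite family.

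No substantive obstacle is expected: once \cref{thm:counter} is available, the corollary reduces to the elementary numerical comparison above. The only care lies in the two structural observations about $F_{\mathcal{P},t}$ — that every link graph is bipartite and that an explicit strong red-blue coloring uses $t-1$ red vertices — both of which follow immediately from the definition of the bipartite $3$-graph in \cref{cons:conj2}.
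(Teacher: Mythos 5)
Your proposal is correct and follows essentially the same route as the paper: compute the prediction of \cref{conj1} via \cref{cons:conj} (observing all links of $F_{\mathcal{P},t}$ are bipartite, so $\ell_i=2$ and only type~(1) edges survive), and contrast it with the lower bound from \cref{thm:counter} in the regime $s\ge t$. The only cosmetic difference is that you establish $q(F_{\mathcal{P},t})\le t-1$ directly from the coloring, whereas the paper invokes \cref{prop:q(F)} for the exact value $q(F_{\mathcal{P},t})=t-1$; the upper bound alone suffices for your comparison.
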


\subsection{A positive case for Conjecture \ref{conj1}} 
The \textit{full star} $J_t$ is defined as the $3$-graph on the $t+1$ vertices consisting of a center vertex $v_0$ and other $t$ vertices, where the edge set consists of all triples containing $v_0$. The $3$-graph $K_4^{3-}$ is obtained by removing one edge from the complete $3$-graph $K_4^3$. Observe that $J_3$ is isomorphic to $K_4^{3-}$. Known results on the Tur\'an number of $J_t$ are rare. In fact, the best known lower bound construction arises from the daisy graph \cite{bollobas2011daisies}. In this paper, we determine the asymptotic value  of $\ex(n,\{J_t,M_{s+1}^3\})$, which serves as a positive example for Conjecture \ref{conj1}. 

\begin{theorem}\label{thm:J_t}
    For each $s\geq 1$, $t\geq 3$ and sufficiently large $n$, 
    $$\ex(n,\{J_t,M_{s+1}^3\})= s\cdot t(n-s, t-1)+O(n).$$ 
    In particular, for $t=3$, $s=1,2$ and  $n \geq 150$, $$
   \ex(n, \{K_4^{3-}, M_{s+1}^3\}) =
   \begin{cases}
      2 \lfloor \frac{(n - 2)^2}{4} \rfloor + 1, &\text{ if } s=2 \text{ and } n \text{ is odd}, \\
      s \lfloor \frac{(n - s)^2}{4} \rfloor, &\text{ otherwise}.
   \end{cases}
   $$ 
\end{theorem}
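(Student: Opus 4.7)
For the asymptotic part, I would apply Construction~\ref{cons:conj} with $F=J_t$. Since the center of $J_t$ has link $K_t$ while every other vertex has a bipartite link, the canonical ordering gives $\ell_1=t$ and only $H_1$ is relevant: $|A_1|=0$, $|A_2|=s$, $|B|=n-s$, with $T(n-s,t-1)$ embedded in $B$ and all type-$(2)$ hyperedges included, so that $|E(H_1)|=s\cdot t(n-s,t-1)$. The link-chromatic discussion after Construction~\ref{cons:conj} already certifies $J_t$-freeness and $\nu(H_1)\le s$. For the $t=3$, $s=2$ exact statement with $n$ odd, I will use a modified construction: link $a_1$ to $T(n-2,2)=K_{B_1,B_2}$ (with $|B_1|=(n-1)/2$, $|B_2|=(n-3)/2$), link $a_2$ to the \emph{shifted} bipartite graph $K_{B_1\setminus\{b\},\,B_2\cup\{b\}}$ for a fixed $b\in B_1$, and add the single hyperedge $\{a_1,a_2,b\}$; a direct link-by-link check shows the result is $\{K_4^{3-},M_3^3\}$-free with $2\lfloor(n-2)^2/4\rfloor+1$ hyperedges.

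\textbf{Asymptotic upper bound.} Let $H$ be $\{J_t,M_{s+1}^3\}$-free, fix a maximum matching $M$ in $H$, and set $S:=V(M)$, so $|S|\le 3s$ and every edge meets $S$. Split $E(H)=E_1\cup E_2\cup E_3$ by $|e\cap S|\in\{1,2,3\}$; trivially $|E_3|=O(1)$ and $|E_2|=O(n)$. The key step is the \emph{extendability claim}: for each $e\in M$, at most one vertex $v\in e$ satisfies $|L_H(v)\cap\binom{V\setminus S}{2}|>Cn$ for a suitable constant $C=C(s)$, since otherwise greedy selection of disjoint pairs inside $V\setminus S$ from two large links would extend $M$ to an $M_{s+1}^3$. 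As $H$ is $J_t$-free, each $L_H(v)$ is $K_t$-free, so Tur\'an's theorem gives $|L_H(v)\cap\binom{V\setminus S}{2}|\le t(n-|S|,t-1)=t(n-s,t-1)+O(n)$. Summing the distinguished vertex of each $e\in M$ (at most $s$ terms) and the remaining $\le 2s$ small-link vertices in $S$ yields $|E_1|\le s\cdot t(n-s,t-1)+O(n)$, matching the lower bound.

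\textbf{Exact analysis for $t=3$.} For $s=1$, $\nu(H)\le 1$ means $H$ is pairwise intersecting; the Hilton--Milner theorem forces $H$ to be a star centered at some $v_0$ or to have $O(n)$ edges, and in the star case $K_4^{3-}$-freeness plus Mantel give $|E(H)|=|L_H(v_0)|\le\lfloor(n-1)^2/4\rfloor$. For $s=2$, I would run a stability version of the asymptotic argument: matching-augmentation forces any near-extremal $H$ to admit a $2$-element cover $\{v_1,v_2\}$ (any edge whose sole $V(M)$-contact is non-distinguished, or whose $V(M)$-contacts miss $\{v_1,v_2\}$ entirely, would combine with large-link $v_i$-edges to produce $M_3^3$). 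Set $G_i:=L_H(v_i)\cap\binom{V\setminus\{v_1,v_2\}}{2}$ (triangle-free by $K_4^{3-}$-freeness of $L_H(v_i)$) and $B_{12}:=\{b:\{v_1,v_2,b\}\in E(H)\}$. Then triangle-freeness of $L_H(b)$ for $b\in B_{12}$ translates into $N_{G_1}(b)\cap N_{G_2}(b)=\emptyset$, while triangle-freeness of $L_H(v_i)$ forbids any $G_i$-edge inside $B_{12}$. A direct Mantel-type optimization gives
\[
|E(H)|=|G_1|+|G_2|+|B_{12}|\ \le\ 2\bigl\lfloor(n-2-k)^2/4\bigr\rfloor+k(n-2-k)+k
\]
with $k=|B_{12}|$, which I would check is maximized at $k=0$ for $n$ even (value $2\lfloor(n-2)^2/4\rfloor$) and at $k=1$ for $n$ odd (value $2\lfloor(n-2)^2/4\rfloor+1$), as claimed.

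\textbf{Main obstacle.} The asymptotic step is routine; the difficulty lies in the $s=2$ exact case. Two delicate points arise: (i) upgrading the $\le 6$-element cover $V(M)$ to a \emph{precise} $2$-element cover by ruling out every rogue edge configuration inside $V(M)$, which requires a careful case analysis using several matching-augmentation arguments; and (ii) extracting the parity-dependent $+1$ summand, which demands pinning down both Mantel extremal bipartitions simultaneously and verifying that a vertex $b\in B_{12}$ must sit in the \emph{larger} Tur\'an part in order to realize disjoint $G_1,G_2$-neighborhoods---something feasible only when $n-2$ is odd. The explicit threshold $n\ge 150$ emerges from absorbing the $O(n)$ asymptotic error into the Hilton--Milner and Mantel stability gaps.
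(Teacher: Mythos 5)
Your strategy matches the paper's in its essentials and your $s=2$ endgame (the $U=N_H(v_1,v_2)$ versus $\overline U$ split, the three constraints from $K_4^{3-}$-freeness, the function $f(k)=2\lfloor(n-2-k)^2/4\rfloor+k(n-2-k)+k$, and the observation that $f$ peaks at $k=1$, with $f(1)=f(0)+1$ for odd $n$ and $f(1)=f(0)$ for even $n$) is exactly the paper's computation. Your lower-bound constructions are the same up to relabeling: your shifted-bipartite construction for odd $n$ is isomorphic to the paper's "complete $3$-partite plus a special vertex $3$" construction once you set $b=3$. Your $s=1$ argument via Hilton--Milner plus Mantel is a valid alternative to what the paper does; both are short.

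The genuine difference, and the place your sketch is weakest, is the route to the $2$-element cover for $s=2$. You work from the cover $V(M)$ of a maximum matching ($|V(M)|\le 6$) and propose to "upgrade" it to a precise $2$-cover via case analysis on rogue edges inside $V(M)$ — a step you yourself flag as delicate and leave unfinished. The paper sidesteps this entirely with Lemma~\ref{fact}: define $A=\{v:d_H(v)\ge 3sn+1\}$, so $|A|\le s$ and $e(H[B])\le 9s^2n$; the extremality hypothesis $e(H)>e(H(n,s))$ forces $|A|=s$ (else the edge count is too small once $n$ is moderately large, which is exactly where the threshold $n\ge 150$ comes from), and then Lemma~\ref{fact}(2) says that $|A|=s$ forces $B$ to be weakly independent, i.e., $A$ is a cover. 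This degree-threshold definition of $A$ is what makes the cover step a two-line argument rather than a case analysis. Your asymptotic upper bound via $|e\cap V(M)|$ also works and the extendability claim is sound, but again the paper's $A/B$ decomposition gives the same bound with less bookkeeping and, crucially, is the very object reused in the exact $s=1,2$ analysis. So: same theorem, same constructions, same final optimization, but the paper's Lemma~\ref{fact} is the missing gadget that turns your "delicate point (i)" into a routine step.
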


\subsection{The Tur\'{a}n number of $\{F_{3,2},M_{s+1}^3\}$}

Lastly, we study the $3$-graphs  $F_{3,2}=([5],\{123,145,245,345\})$, a fundamental structure in hypergraph Tur\'an theory. In graphs, triangle-freeness is equivalent to the independence of all vertex neighborhoods. Analogously, in a $3$-graph, the neighborhood of every pair is an independent set if and only if the hypergraph is $F_{3,2}$-free. The extremal problem for $F_{3,2}$ was initially motivated by a Ramsey–Tur\'an hypergraph problem introduced by Erd\H{o}s and S\'os \cite{erdHos1979problems}. The notation $F_{3,2}$ was later adopted by R\"odl and Mobayi \cite{MR02}. The Tur\'an density of $F_{3,2}$ was first determined by F\"uredi, Pikhurko, and Simonovits \cite{furedi2003turan}, who also established its exact Tur\'an number subsequently \cite{furedi2005triple}. 

We extend this line of research by determining the exact Tur\'an number $\ex(n, 
\{F_{3,2}, M_{s+1}^3\})$ for all $n \ge 12s^2$ and providing a characterization of the extremal structure. To this end, we first introduce the following construction.  
\begin{construction}\label{construction2}
  Let $A$ and $B$ be two disjoint vertex sets with $|A| = s$ and $|B| = n - s$. Define a $3$-graph $\mathcal{H}(n,s)$ on the vertex set $A \cup B$, where the edge set is given by
    $$E(\mathcal{H}(n,s)) := \left\{ ab_1b_2: a \in A,\; b_1, b_2 \in B,\; b_1 \ne b_2 \right\}.$$  
\end{construction}
\noindent Clearly, $e(\mathcal{H}(n,s)) = s\binom{n-s}{2}$, and $\mathcal{H}(n,s)$ is $F_{3,2}$-free and $\nu(\mathcal{H}(n,s)) \leq s$. Therefore, we have the lower bound $\ex(n,\{F_{3,2},M_{s+1}^3\})\ge s\binom{n-s}{2}$. Further, we establish that this construction is optimal for all $n \ge 12s^2$, thereby determining the exact Tur\'an number in this range.

\begin{theorem}\label{thm:f32}
   Let \( n \geq 12 s^2 \). Then
   $$\ex(n, \{F_{3,2}, M_{s+1}^3\}) = s \binom{n - s}{2},$$
   and $\mathcal{H}(n,s)$ is the unique extremal construction.
\end{theorem}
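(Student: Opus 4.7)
The lower bound $e(H)\ge s\binom{n-s}{2}$ follows from \Cref{construction2}, so we focus on the matching upper bound together with uniqueness. Let $H$ be a $\{F_{3,2},M_{s+1}^3\}$-free $3$-graph on $n\ge 12s^2$ vertices with $e(H)\ge s\binom{n-s}{2}$; the goal is to prove $H\cong \mathcal{H}(n,s)$. First, the Erd\H{o}s matching conjecture for $r=3$ (a theorem) gives $\ex(n,M_{k+1}^3)\le \binom{n}{3}-\binom{n-k}{3}=\sum_{j=0}^{k-1}\binom{n-1-j}{2}$, which for $k\le s-1$ and $n\ge 12s^2$ is strictly less than $s\binom{n-s}{2}$. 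Hence $\nu(H)=s$; fix a maximum matching $M=\{e_1,\dots,e_s\}$, write $V(M)=\bigcup e_i$ and $U=V(H)\setminus V(M)$ with $|U|=n-3s$, and classify each edge by $|e\cap V(M)|\in\{1,2,3\}$ into Type~1, 2, 3 (no edge lies in $U$ by maximality of $M$).

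For $v\in V(M)$ set $T_v:=\{xy\in\binom{U}{2}:vxy\in E(H)\}$, a graph on $U$ encoding Type~1 edges at $v$. The \emph{key augmentation lemma} is: for any $e_i=\{a,b,c\}$ and any two distinct $u,v\in e_i$, there are no $f\in T_u$ and $g\in T_v$ with $V(f)\cap V(g)=\emptyset$, since otherwise $(M\setminus\{e_i\})\cup\{u\cdot f,v\cdot g\}$ is a matching of size $s+1$. A standard consequence: if some $T_v$ ($v\in e_i$) has matching number $\ge 2$ in the graph sense (two disjoint edges $f_1,f_2$), then every edge of each of the other two $T_\cdot$'s must meet both $f_1$ and $f_2$, forcing $|T_\cdot|\le 4$. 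The hypothesis $e(H)\ge s\binom{n-s}{2}$ combined with $n\ge 12s^2$ rules out the degenerate case where all three $T_\cdot$'s at some $e_i$ have matching $\le 1$ (total $O(n)$ per matching edge, a loss of $\Theta(n^2)$ that cannot be recovered from Type~2 or Type~3). Thus, for each $e_i$ we select a distinguished vertex $a_i^*\in e_i$ whose two companions $b_i^*,c_i^*$ satisfy $|T_{b_i^*}|,|T_{c_i^*}|\le 4$. Set $A:=\{a_1^*,\dots,a_s^*\}$ and $S:=V(M)\setminus A$.

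We next bound Type~2 and Type~3 edges pair-by-pair via further augmentation. For an intra-edge $S$-pair $\{b_i^*,c_i^*\}$: if $b_i^*c_i^*u\in E(H)$ for some $u\in U$, then $T_{a_i^*}$ (with matching $\ge 2$) contains an edge avoiding $u$, and combining this with the Type~2 edge produces an $(s+1)$-matching, so $|L(b_i^*,c_i^*)\cap U|=0$. Similarly, for any $S$-pair $\{v,w\}$ with $v\in e_i,w\in e_j$ and $i\ne j$, augmentation using $T_{a_i^*}$ and $T_{a_j^*}$ gives $|L(v,w)\cap U|=0$. For $A\times A$ pairs $\{a_i^*,a_j^*\}$, where pure augmentation alone is insufficient, we supplement it with $F_{3,2}$-freeness: the link $L(a_i^*,a_j^*)$ contains no $H$-edge, and the resulting cross-constraints on $L(a_i^*,u)$ and $L(a_j^*,u)$ in the near-extremal regime force $|L(a_i^*,a_j^*)\cap U|=0$. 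Analogous arguments pin Type~3 edges to the shape $\{a_i^*,s,s'\}$ with $s,s'\in S$. Summing:
\[
e(H)\le s\binom{n-3s}{2}+2s^2(n-3s)+s\binom{2s}{2}=s\binom{n-s}{2},
\]
and equality forces $T_{a_i^*}=\binom{U}{2}$ for each $i$, $L(a_i^*,s)\cap U=U$ for all $(a_i^*,s)\in A\times S$, and the full Type~3 pattern over $A\times \binom{S}{2}$; together these characterize $\mathcal{H}(n,s)$ with $A$ as the part of size $s$.

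The principal technical obstacle is the Type~2 bookkeeping for $A\times A$ pairs: the natural matching-augmentation strategy (removing $e_i,e_j$ and rebuilding three edges in the reduced subgraph) breaks down in the near-extremal regime where the $T_v$'s for $v\in S$ are too small to supply the needed extra edges. Handling this step requires a refinement combining augmentation with $F_{3,2}$-freeness and a careful trade-off argument showing that every extra edge $a_i^*a_j^*u$ forces a compensating loss in $T_{a_i^*}$ or $T_{a_j^*}$ that strictly exceeds the gain. Once this is in place, Type~3 and uniqueness both follow by tracing each equality condition back through the Type~1 and Type~2 bounds.
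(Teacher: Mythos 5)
Your proposal takes a genuinely different route from the paper. The paper's proof works via Lemma~\ref{fact}: it partitions $V(H)$ into a set $A$ of high-degree vertices (those with $d_H(v)\geq 3sn+1$) and its complement $B$, shows $|A|=s$ with both $A$ and $B$ weakly independent, then bounds the maximum co-degree $\Delta^\star := \max_{u,v}|N(u,v)|$ by $n-s-1$. The key step uses $F_{3,2}$-freeness to deduce that each $N(x,y)$ is weakly independent, combined with a degree lower bound for vertices in $A$ (Claim~\ref{degree in A}) and the colored Mantel-type Theorem~\ref{thm:2-colored-triangle}. Your approach instead fixes a maximum matching $M$ and does bookkeeping by the intersection size $|e\cap V(M)|$, using matching augmentations as the main tool. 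That is a legitimately different strategy, and it has the appealing feature of not requiring the colored Tur\'an input.

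However, the proposal as written has genuine gaps that prevent it from closing.

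First, the claimed inequality
\[
e(H)\le s\binom{n-3s}{2}+2s^2(n-3s)+s\binom{2s}{2}
\]
does not account for the Type~1 edges coming from $T_{b_i^*}$ and $T_{c_i^*}$. You establish $|T_{b_i^*}|,|T_{c_i^*}|\le 4$, which permits up to $8s$ extra edges; since the target value $s\binom{n-s}{2}$ is attained with equality, you cannot afford any slack. You would need a further argument showing these graphs are actually empty (or that each nonzero $T_{b_i^*}$ entails a strictly larger loss elsewhere), and no such argument is given.

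Second, and more seriously, you explicitly acknowledge that the bound $|L(a_i^*,a_j^*)\cap U|=0$ for $A\times A$ pairs ``breaks down'' under pure augmentation and that you need ``a refinement combining augmentation with $F_{3,2}$-freeness and a careful trade-off argument.'' This refinement is described only in general terms and is not carried out; it is precisely the kind of near-extremal compensation analysis that tends to be delicate, so it cannot be left as a sketch. The paper sidesteps this entirely by working with co-degrees of arbitrary pairs and the weakly independent structure of $N(x,y)$, where the $F_{3,2}$-freeness enters cleanly through Claim~\ref{Ind set} and Claim~\ref{clm:n-s-1}.

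Third, the step ruling out the degenerate case where all three $T_u$ ($u\in e_i$) have matching number $\le 1$, and the claim that ``analogous arguments pin Type~3 edges to the shape $\{a_i^*,s,s'\}$'' (which also needs to exclude $\{a_i^*,a_j^*,s\}$, $\{a_i^*,a_j^*,a_k^*\}$, and $\{s,s',s''\}$), are asserted but not actually proved. The outline is plausible, but as it stands this is a proof strategy with several unfilled holes, not a complete proof.
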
 

One can readily verify that $q(F_{3,2})=\infty$ and we determine the exact value of $\ex(n,\{F_{3,2},M^r_{s+1}\})$. This leads to a natural question: for every $3$-graph $F$ satisfying $q(F)=\infty$, is it true that $\ex(n,\{F,M_{s+1}^3\})$\\$=s\cdot \binom{n-s}{2}$? However, as we will show in \cref{sec:f32}, the exact Tur\'an numbers of such $3$-graphs $F$ are not always the same.

\vspace{2mm}

\textbf{Organization.} The rest of this paper is structured as follows. In \Cref{sec2}, we introduce necessary notation and preliminary lemmas. In Sections \ref{sec:counterexample} and \ref{sec:examples}, we present the proofs of \Cref{thm:counter} and \Cref{thm:J_t} respectively, which provide several counterexamples and supporting examples to Conjecture \ref{conj1}. \Cref{sec:f32} contains the proof of \Cref{thm:f32}. Concluding remarks and future directions are discussed in \Cref{sec:concluding remarks}.

\section{Preliminary}\label{sec2}
We define some necessary notation. Let $n,m\in \mathbb{N}$ and $m\leq n$, write $[n]=\{1,2,\ldots,n\}$ and $[m,n]=\{m,m+1,\ldots,n\}$. For any $X \subseteq [n]$, we use $\overline{X}$ to denote the complement $[n] \setminus X$.  Let $H$ be a 3-graph and $S \subseteq V(H)$, define $H[S]$ as the subgraph of $H$ induced by $S$.  Given a vertex $u \in V(H)$, set 
$$L_{H}(u,S)=\left\{v w \in\binom{S}{2}: u v w \in E(H)\right\}$$
and $d_{H}(u, S)=\left|L_{H}(u, S)\right|$.
To simplify notation, we use $L_{H}(u)$  and $d_{H}(u)$ instead of $L_{H}(u,V(H))$  and $d_{H}(u,V(H))$. 
For $u, v\in V(H)$, let $N_{H}(u, v)$ denote the edges in $H$ containing both $u$ and $v$, i.e., 
\begin{center}
 $N_{H}(u, v)=\{w \in V(H): u v w \in E(H)\}$ and $d_{H}(u, v)=\left|N_{H}(u, v)\right|$.   
\end{center}
 When the context is clear, we often omit the subscript $H$. For $S, S^{\prime} \subseteq V(H)$ and $S \cap S^{\prime}=\emptyset$, let $H\left[S, S^{\prime}\right]$ denote the subgraph on the vertex set $S \cup S^{\prime}$ with the edge set consisting of all $e \in E(H)$ intersecting both $S$ and $S^{\prime}$.  

We say that a subset $S\subseteq V(H)$ is a \textit{weakly independent set} of $H$ if $S$ does not contain any edge of $H$. Intuitively, the next lemma shows that if $H$ has bounded matching number, then the number of high degree vertices is small, yet they almost cover all edges of $H$.
\begin{lemma}\label{fact}
    Let $H$ be a $3$-graph on the vertex set $[n]$ with $\nu(H)\leq s$. Define 
    \begin{equation}\label{eq:A and B}
        A:=\{v\in V(H) \colon d_{H}(v) \geq 3sn+1\} \text{ and } B:= [n] \setminus A. 
    \end{equation}
Then the following statements hold: 
  \begin{enumerate}[$(1)$]
        \item $e(H[B]) \leq 9s^2n$;
        \item $|A| \leq s$. In particular, if $|A| = s$, then $B$ is a weakly independent set.
    \end{enumerate}
\end{lemma}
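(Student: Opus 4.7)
The plan is to handle (2) first via a greedy matching argument and then deduce (1) by a vertex-cover bound from a maximum matching inside $H[B]$.

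For (2), I would argue by contradiction: suppose $|A|\ge s+1$ and build a matching of size $s+1$ greedily. Choose any $s+1$ distinct vertices $v_1,\ldots,v_{s+1}\in A$, and at step $i$ extend the matching $\{e_1,\ldots,e_{i-1}\}$ by picking an edge $e_i$ through $v_i$ disjoint from the at most $3(i-1)$ previously used vertices. The number of edges through $v_i$ that meet the used set is bounded by $3(i-1)(n-2)\le 3s(n-2)<3sn$, because each used vertex $u\ne v_i$ lies in at most $n-2$ edges together with $v_i$. Since $d_H(v_i)\ge 3sn+1$ by the definition of $A$, such an $e_i$ exists. Iterating gives $\nu(H)\ge s+1$, a contradiction, so $|A|\le s$. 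For the "in particular" clause, suppose $|A|=s$ and $B$ contains an edge $e$. Start the matching with $e$ and then greedily extend through the $s$ vertices of $A$ as above; now at the $j$-th extension step the used set has size $3j\le 3s$, and the same inequality $3s(n-2)<3sn\le d_H(a)-1$ shows we can keep going. The resulting matching has size $s+1$, again contradicting $\nu(H)\le s$.

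For (1), let $M$ be a maximum matching of the induced subgraph $H[B]$, and set $k:=|M|$. Since $\nu(H[B])\le \nu(H)\le s$, we have $k\le s$. By maximality, every edge of $H[B]$ meets $V(M)$, so
\[
e(H[B])\ \le\ \sum_{v\in V(M)} d_{H[B]}(v).
\]
Every $v\in V(M)\subseteq B$ satisfies $d_{H[B]}(v)\le d_H(v)\le 3sn$ by definition of $B$, and $|V(M)|=3k\le 3s$. Therefore $e(H[B])\le 3k\cdot 3sn\le 9s^2n$, giving (1).

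There is no serious obstacle: the only mild point is balancing the constants so that the greedy step still succeeds at $i=s+1$, which is exactly why the threshold in the definition of $A$ is set to $3sn+1$ rather than anything smaller. The argument for the "in particular" clause uses the same greedy mechanism with the hypothetical edge in $B$ as a seed, so it requires no new idea beyond (2).
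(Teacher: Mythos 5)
Your proof of part (1) is correct and is essentially the paper's argument: take a maximal matching $M$ of $H[B]$, note that $V(M)$ is a vertex cover of $H[B]$ of size $3\nu(H[B])\le 3s$, and bound $e(H[B])\le\sum_{v\in V(M)}d_{H[B]}(v)\le 3s\cdot 3sn=9s^2n$.

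Your proof of part (2) has a genuine gap. You fix $v_1,\dots,v_{s+1}\in A$ and at step $i$ greedily pick an edge $e_i\ni v_i$ that is disjoint from the previously used vertices $V(e_1)\cup\cdots\cup V(e_{i-1})$. But nothing in your construction prevents an earlier edge $e_j$ (with $j<i$) from containing the later anchor $v_i$. For instance, if $v_1v_2x\in E(H)$ and it is chosen as $e_1$, then $v_2$ is already used, and there is no edge through $v_2$ disjoint from $V(e_1)$; the process is stuck at step $2$. Thus the claim "such an $e_i$ exists" is not justified as written, because you may be asking for an edge through a vertex that is itself in the forbidden set. The repair, which is exactly what the paper does, is to additionally forbid the future anchors at each step: require the two companions of $v_i$ in $e_i$ to avoid $\{v_{i+1},\dots,v_{s+1}\}$ as well as the used vertices. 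The forbidden set then has size at most $s+2(i-1)\le 3s$, and your estimate of at most $n-2$ pairs of $L_H(v_i)$ through each forbidden vertex still gives at most $3s(n-2)<3sn<d_H(v_i)$ excluded pairs, so a valid choice exists and the anchors stay untouched until their turn. The same modification is needed in your argument for the ``in particular'' clause: the seed edge $e\subseteq B$ automatically avoids $A$, but the greedy extensions can still prematurely consume a later anchor in $A$ unless you explicitly exclude them.
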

\begin{proof}
    (1)  Since $\nu(H[B])\leq \nu(H) \leq s$, the subgraph $H[B]$ admits a vertex cover $X \subseteq B$ with $|X|\le 3s$. In other words, every edge of $H[B]$ must intersect $X$. By the definition of $B$, for every $v\in B$, $d_{H[B]}(v)\le d_{H}(v) \le 3sn$. 
    Consequently, 
    $$e(H[B]) \leq \sum_{v\in X}d_{H[B]}(v) \le  3s \cdot 3sn=9s^2n. $$
    
    (2) Suppose, for contradiction, that $|A| > s$. Let $A' = \{v_1, v_2, \ldots, v_{s+1}\}$  be a subset of $A$. For each $v_i \in A'$, $i\in [s+1]$, we iteratively select $u_{i_1}u_{i_2} \in E(L_{H}(v_i))$ such that
\begin{itemize}
    \item $\{u_{i_1}, u_{i_2}\} \cap A'\setminus \{v_i\} = \emptyset$, and
    \item Neither $u_{i_1}$ nor $u_{i_2}$ is selected in any of the previous steps.
\end{itemize}
At each step $i$, there are at most $(s+2(i-1))n$ edges in $L_{H}(v_i)$ that contain a vertex in $A'$ or in the earlier selections.  Since each $v_i\in A$ satisfies 
$$d_{H}(v_i) \geq 3sn+1> (s+2(i-1))n,$$ 
there always exists a valid choice of ${u_{i_1}u_{i_2}}$. Proceeding in this way, we obtain $s+1$ pairwise disjoint edges of $H$, forming a matching of size $s+1$. This contradicts to the assumption $\nu(H)\le s$. Hence we must have $|A|\le s$. The similar argument could show that if $|A| = s$, then $B$ is a weakly independent set.
\end{proof}

\section{Edge-colored Tur\'an problem}\label{sec:edge-colored turan}

Let $G_1, G_2, \cdots,G_s$ be simple graphs on the same vertex set $V$. Define an \textit{edge-colored multiple graph} $G = G_1+G_2+·\cdots+G_s$ as a multiple graph on the vertex set $V$ with edges in $E(G_i)$ having color $i$ for $i = 1,2,\cdots,s$. 
The \emph{multiplicity} of an edge $e
\in \binom{V}{2}$ is written $\omega(e)$. 
Given two sets $S,T\subseteq V(G)$, we use $e_G(S,T)$ to denote the number of edges between $S$ and $T$ in $G$. When $S = \{v\}$, we simply write $e_G(v, T)$ for $e_G(\{v\}, T)$. 
In particular, for $T = V(G)$, we have $e_G(\{v\}, V(G)) = d_G(v)$. 
Further, define the \textit{minimum degree} 
$\delta(G):=\min\{d_G(v):v\in V(G)\}$.  When the context is clear, we often omit the subscript $G$.

An edge-colored $K_t$ is called \textit{$k$-bipartite} if it uses exactly $k$ colors and the subgraph induced by each color class is bipartite. In particular, if the subgraph induced by each color class is a star, then it is called \textit{$k$-star edge-colored $K_t$}. We say $G_1, G_2, \cdots , G_s$ is $k$-star edge-colored $K_t$-free if the edge-colored multiple graph $G = G_1+G_2+·\cdots+G_s$ is $k$-star edge-colored $K_t$-free. 

\begin{theorem}[\cite{wang2025}]\label{thm:2-colored-triangle}
    Let $s\geq 2$ and $G_1, \dots, G_s \subseteq \binom{[n]}{2}$ be $2$-star edge-colored $K_3$-free. Then $$\sum\limits_{i=1}^s e(G_i) \leq s\cdot \left\lfloor\frac{n^2}{4}\right\rfloor.$$
\end{theorem}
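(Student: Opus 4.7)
The plan is to adapt Mantel's degree-sum argument to the edge-colored multigraph setting, combined with a structural decomposition of each color class. The starting observation is that the $2$-star edge-colored $K_3$-free hypothesis is equivalent to the following local constraint: for every vertex $u$ and color $i$, the set $N_{G_i}(u)$ is independent in $\bigcup_{j\ne i} G_j$. Equivalently, whenever $vw\in G_j$ and $i\ne j$, we have $N_{G_i}(v)\cap N_{G_i}(w)=\emptyset$, and consequently $d_{G_i}(v)+d_{G_i}(w)\le n$. This is a Mantel-type inequality applied to each color that does not ``own'' an edge.

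Next I would introduce the decomposition $G_i=G_i^{\star}\cup G_i^{+}$, where $G_i^{\star}=G_i\setminus\bigcup_{j\ne i} G_j$ is the set of edges of $G_i$ exclusive to color $i$, and $G_i^{+}=G_i\cap\bigcup_{j\ne i} G_j$ is the set of edges shared with at least one other color. A short argument using the above local constraint shows that $G_i^{+}$ is triangle-free: if $uvw$ is a triangle in $G_i$ with $vw\in G_i^{+}$, then $vw\in G_k$ for some $k\ne i$, and the $2$-path $v,u,w$ in $G_i$ together with the closing edge $vw\in G_k$ forms a forbidden $2$-star edge-colored $K_3$. Hence $e(G_i^{+})\le\lfloor n^2/4\rfloor$ for every $i$ by Mantel.

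Writing $T=\sum_i e(G_i)=\sum_i e(G_i^{\star})+\sum_i e(G_i^{+})$, one has $\sum_i e(G_i^{+})\le s\lfloor n^2/4\rfloor$ but only $\sum_i e(G_i^{\star})\le\binom{n}{2}$ from the pairwise edge-disjointness of the $G_i^{\star}$'s, which is too weak on its own. To close the gap, I would sum the local inequality $d_{G_i}(v)+d_{G_i}(w)\le n$ over all pairs $(vw,i)$ with $vw$ not exclusive to color $i$, rewrite the left-hand side as $\sum_i\sum_v d_{G_i}(v)\, d_{H_i}(v)$ with $H_i=\bigcup_{j\ne i} G_j$, and then apply the Cauchy--Schwarz inequality $(\sum_v\sum_i d_{G_i}(v))^2\le n\sum_v(\sum_i d_{G_i}(v))^2$ to obtain a quadratic inequality in $T$ that yields the desired bound. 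In parallel I would attempt an induction on $n$: show that some vertex $v$ satisfies $\sum_i d_{G_i}(v)\le s\lfloor n/2\rfloor$ (one readily checks that $s\lfloor(n-1)^2/4\rfloor+s\lfloor n/2\rfloor=s\lfloor n^2/4\rfloor$), and then delete $v$ to invoke the induction hypothesis.

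I expect this final combining step to be the main obstacle: naively, the exclusive layer $\sum_i e(G_i^{\star})$ can saturate $\binom{n}{2}$, so some global cancellation forced by the $2$-star constraint --- which relates triangles in each $G_i^{\star}$ to missing edges in $\bigcup_{j\ne i} G_j$ --- is essential to reach the precise constant $s\lfloor n^2/4\rfloor$. Sharpness is immediate: taking $G_1=\cdots=G_s=T(n,2)$ gives a $2$-star edge-colored $K_3$-free configuration with $\sum_i e(G_i)=s\lfloor n^2/4\rfloor$.
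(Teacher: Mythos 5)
The theorem you were asked to prove is in fact stated in the paper as a citation to a result of Wang (the reference \cite{wang2025}); the paper never reproves it. What the paper does prove is the $K_r$-generalization (\cref{thm:star colored K_r-free}), and only up to an additive $O(n)$ loss: the argument there first reduces to $s=r-1$ by averaging, then performs a minimum-degree reduction (iteratively deleting any vertex of degree below $(r-1)(n-\lceil n/(r-1)\rceil)+r-1$, at an affordable cost of $(r-1)m$ per stage), and finally builds the forbidden $(r-1)$-star edge-colored $K_r$ greedily, starting from an edge of full multiplicity $r-1$ and using a pigeonhole count of cut edges to extend the clique one vertex at a time. That is a very different mechanism from your decomposition.

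Your preliminary observations are correct and nicely stated: the equivalence of the hypothesis with ``$N_{G_i}(u)$ is independent in $\bigcup_{j\ne i}G_j$,'' the consequent Mantel-type inequality $d_{G_i}(v)+d_{G_i}(w)\le n$ for $vw\in G_j$, $j\ne i$, and the fact that no triangle of $G_i$ uses an edge of $G_i^{+}=G_i\cap\bigcup_{j\ne i}G_j$. However, the proposal has a genuine gap precisely where you flag it: neither of your two candidate closing steps is actually carried out. For the degree-sum/Cauchy--Schwarz route, even in the cleanest case $s=2$, summing the local inequality only gives $2\sum_v d_{G_1}(v)d_{G_2}(v)\le n\bigl(e(G_1)+e(G_2)\bigr)$, which bounds the correlation $\sum_v d_{G_1}(v)d_{G_2}(v)$ from above but says nothing directly about $e(G_1)+e(G_2)$; the Cauchy--Schwarz step you write down does not transform this into a usable quadratic inequality in $T$. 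For the induction on $n$, the identity $s\lfloor(n-1)^2/4\rfloor+s\lfloor n/2\rfloor=s\lfloor n^2/4\rfloor$ is right, but you give no argument that a vertex with $\sum_i d_{G_i}(v)\le s\lfloor n/2\rfloor$ must exist, and a simple degree-sum count does not force one when $\sum_i e(G_i)$ barely exceeds $s\lfloor n^2/4\rfloor$; you would need an additional structural argument to handle the regime in which every vertex has large total degree. So as it stands this is a collection of true lemmas plus two unconverged strategies, not a proof. If you want a complete argument along the lines in this paper, a natural path is: average down to $s=2$, run a vertex-deletion/minimum-degree cleanup as in the proof of \cref{thm:star colored K_r-free}, and then replace the greedy embedding by a sharper analysis to kill the $O(n)$ error; alternatively, the exact bound is exactly what \cref{conj:general} asks for in general $r$, and the paper explicitly leaves that open.
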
 

This result can be viewed as a colored analogue of Mantel’s theorem, and it is extended to larger cliques in Theorem \ref{thm:star colored K_r-free}. We now proceed to the proof of Theorem \ref{thm:star colored K_r-free}.
\begin{proof}[Proof of \cref{thm:star colored K_r-free}]
For $s \leq r-2$, it's trivial that $\sum_{i=1}^{s}e(G_i)\leq s\binom{n}{2}$. We now treat the case $s\geq r-1$ by induction on $s$. 
For the base case $s = r - 1$, we assume that the claim holds, i.e., 
$$\sum_{i=1}^{r-1}e(G_i)\le (r-1) \cdot t(n,r-1)+(r-1)n.$$ 
Assume that the statement holds for all integers less than $s$, in particular, for every subset $S\subseteq [s]$ with size $s-1$, we have 
$$\sum_{i\in S}e(G_i)\leq (s-1)\cdot t(n,r-1) + (s-1)n.$$
There are exactly $s$ such subsets $S$, and each graph $G_i$ appears in exactly $s-1$ of them. Therefore, summing over all subsets $S$ of size $s-1$, we obtain 
$$\sum_{S\in \binom{[s]}{s-1}}\sum_{i\in S}e(G_i)=(s-1)\sum_{i=1}^s e(G_i).$$
On the other hand, by the induction hypothesis, we get 
$$\sum_{S\in \binom{[s]}{s-1}}\sum_{i\in S}e(G_i)\le s\cdot \left((s-1)\cdot t(n,r-1) + (s-1)n\right).$$ 
Combining the two expressions yields $\sum_{i=1}^{s}e(G_i)\le s \cdot t(n,r-1) + sn$. It suffices to consider the base case $s=r-1$. 

Suppose that $G_1,\cdots,G_{r-1}$ is $(r-1)$-star edge-colored $K_r$-free with maximal total number of edges. Let $G = G_1+G_2+\cdots+G_{r-1}$ be the edge-colored multiple graph. Suppose to the contrary that $e(G) > (r-1) \cdot t(n,r-1)+(r-1)n$. 
Assume that $\delta(G)\geq (r-1)(n-\lceil \frac{n}{r-1}\rceil) + r-1$. Otherwise, let $G = G(n)\supseteq G(n-1)\supseteq\cdots$ be a sequence of induced multigraphs of $G$ as follows. If $G(m+1)$ has a vertex $v_i$ of degree less than $(r-1)(m+1-\lceil \frac{m+1}{r-1}\rceil) + r-1$, then let $G(m):=G(m+1)-v_i$. If not, we terminate our sequence. 

Set $$f(m)=e(G(m))-(r-1) \cdot t(m,r-1)-(r-1)m,$$ 
we have $f(n) > 0$ and $f(m)-f(m+1)\geq 1$. This process stops before we reach a multigraph $G(n')$ on $n' = n^{1/3}$ vertices, as then
$$
n-n'\leq \sum_{m=n'}^{n-1}(f(m)-f(m+1))< f(n')<(r-1)\binom{n'}{2}<(r-1)n'^2/2,
$$
contradicting to $n>r^3$. Therefore, without loss of generality, we may assume $\delta(G)\geq (r-1)(n-\lceil \frac{n}{r-1}\rceil) + r-1$. 

As $e(G) > (r-1) \cdot t(n,r-1)+(r-1)n$, there exists an edge, say $v_1v_2$, such that $w(v_1v_2) = r-1$, i.e., edge $v_1v_2$ presents in all $G_1, \cdots , G_{r-1}$. Let $T$ be a set of vertices with $|T|=t\leq r-1$. Then 
\begin{equation*}\label{eq:T}
\begin{aligned}
    e_G(T,V(G)\backslash T)&=\sum_{v\in T}d_G(v)-2e(G[T])\\
    &\geq t\cdot \left( (r-1)\bigl(n-\bigl\lceil \frac{n}{r-1}\bigr\rceil\bigr) + r-1 \right) -(r-1)t(t-1)\\
    &>(r-1)t\left(\frac{r-2}{r-1}n-t+1\right)\\
    &=(n-t)\cdot t(r-2) + t(r-1-t),
\end{aligned}
\end{equation*}
where the second inequality holds since $\lceil x \rceil <x +1$. 
Therefore, there exists a vertex $u \in V(G)\backslash T$ with $$e_G(u,T) > t(r-2) + \frac{t(r-1-t)}{n-t}.$$ 
Since $e_G(u,T)$ is an integer, we conclude that for any set $T$ of size $t \leq r-1$, there exists a vertex $u \in V(G)\backslash T$ with 
\begin{equation}\label{eq:cut-edges}
    e_G(u,T) \geq t(r-2)+1.
\end{equation}

Starting from $\{v_1,v_2\}$, iteratively apply the \eqref{eq:cut-edges} to obtain $v_3,v_4,\cdots, v_r$ such that for each $i\in \{3,\cdots,r\}$, 
\begin{align*}
    &e_G(v_i,\{v_1,\cdots,v_{i-1}\})=\sum\limits_{j=1}^{i-1} w(v_iv_j) \\
    &\geq (i-1)(r-2) + 1 = (i-2)(r-1) + r-i+1.
\end{align*}
Therefore, there are at least $r-i+1$ colors that appear simultaneously on all edges of the form $v_i v_j$ with $1 \le j \le i-1$, which we denote by the star $S_i$. 
Assign these common colors greedily so that $S_r, \dots, S_3$ receive different colors. 
Together with $w(v_1v_2)=r-1$, this yields an $(r-1)$-star edge-colored $K_r$, a contradiction. 
Therefore, our initial assumption was false, and $e(G) \le (r-1) \cdot t(n,r-1)+(r-1)n$. 
This completes the proof.
\end{proof}

\section{Counterexamples}\label{sec:counterexample}
In this section, we present the proofs of \cref{thm:counter} and Corollary \ref{cor:conterexample}, which exhibit an infinite family of counterexamples to Conjecture \ref{conj1}, demonstrating that the conjecture does not hold in general. 
We begin with a proposition as follows.
\begin{prop}\label{prop:q(F)}
    $q(F_{\mathcal{P}, t}) = t-1$.
\end{prop}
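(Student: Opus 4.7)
The plan is to establish the two inequalities $q(F_{\mathcal{P},t})\le t-1$ and $q(F_{\mathcal{P},t})\ge t-1$ separately. The upper bound is immediate: assigning color red to every vertex of $A=\{a_1,\dots,a_{t-1}\}$ and color blue to every vertex of $[t]$ yields a proper coloring in which each edge $a_iij$ contains exactly one red vertex $a_i$. Hence $q(F_{\mathcal{P},t})\le t-1$.

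For the lower bound, I would fix an arbitrary strong red–blue coloring with red set $R$ and argue that $|R|\ge t-1$ by a short case analysis on $R\cap[t]$. The structural key is that for any two vertices $k_1<k_2$ in $[t]$, the edge $a_{k_1}k_1k_2$ contains both of them; thus a strong coloring forces $|R\cap[t]|\le 1$.

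First I would handle the case $R\cap[t]=\emptyset$: then every $i,j\in[t]$ in an edge $a_iij$ is blue, so the ``exactly one red vertex'' requirement forces $a_i\in R$ for all $i\in[t-1]$, giving $|R|\ge t-1$. Next I would handle the case $R\cap[t]=\{k\}$ for some $k$. Looking at edges incident to $k$, namely $a_kkj$ (for $j>k$) and $a_iik$ (for $i<k$), strongness forces $a_k$, all $j>k$, all $a_i$ with $i<k$, and all $i<k$ to be blue. Then an edge $a_iij$ with $i<k<j$ would have $i,j,a_i$ all blue, violating strongness; similarly $k=t$ fails by considering any edge $a_iij$ with $i<j<t$. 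Therefore the only feasible subcase is $k=1$, and there every edge $a_iij$ with $2\le i<j\le t$ has both $i$ and $j$ blue, forcing $a_2,\dots,a_{t-1}$ to be red. Together with $1\in R$ this again yields $|R|\ge t-1$.

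The case analysis is essentially the whole proof, and I do not anticipate a real obstacle; the only mildly delicate point is verifying that the ``intermediate'' values $1<k<t$ and the endpoint $k=t$ are actually infeasible, which is a short bookkeeping argument on which vertices are forced blue and which edges then fail to contain any red vertex. Combining the two bounds gives $q(F_{\mathcal{P},t})=t-1$.
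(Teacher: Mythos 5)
Your proof is correct and follows essentially the same approach as the paper: both observe that $|R\cap[t]|\le 1$ (since every pair in $[t]$ lies in a common edge) and then case-analyze on whether $R\cap[t]$ is empty or a singleton $\{k\}$. The only notable difference is in the singleton case: you rule out $k\ne 1$ by tracking which vertices are forced blue by edges through $k$ and then exhibiting an all-blue edge, whereas the paper gets the same conclusion more directly by choosing $j\ne r$ to force $a_1$ red via $a_11j$, and then observing $a_11r$ has two reds. You also make the upper bound $q(F_{\mathcal P,t})\le t-1$ explicit via the coloring ($A$ red, $[t]$ blue), which is implicit in the paper's Case~1; this is a small but welcome tightening of the logic.
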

\begin{proof}
Let $(R,B)$ be a strong red–blue coloring of $F_{\mathcal{P},t}$, with red set $R$ and blue set $B$. 
By Construction~\ref{cons:conj2}, we have $V(F_{\mathcal{P},t})=A\cup [t]$ with $A=\{a_1,\dots,a_{t-1}\}$, and the edge set is exactly
$$\bigl\{\,a_i\,i\,j:\ i\in[t-1],\ j\in\{i+1,\dots,t\}\,\bigr\}.$$
We first note that
$$|R\cap [t]|\le 1,$$
since every pair $\{u,v\}\subseteq [t]$ lies in some edge $a_k u v$ (with $k=\min\{u,v\}$), whereas a strong coloring permits exactly one red vertex per edge. 

\noindent\textbf{Case 1: $R\cap [t]=\emptyset$.}
Then all vertices of $[t]$ are blue. For each $i\in[t-1]$ choose $j=i+1$. The edge $a_i\,i\,j$ has its two $[t]$-vertices blue, hence $a_i$ must be red. Thus $a_1,\cdots,a_{t-1}\in R$ and so $|R|=t-1$.

\noindent\textbf{Case 2.} Assume $R\cap[t]=\{r\}$ for some $r\in[t]$. We claim that $r=1$. Suppose that $r\ne 1$. Choose any $j\in\{2,\dots,t\}\setminus\{r\}$. 
Then $1,j\in B$, so the edge $a_1 1j\in E(F_{\mathcal{P},t})$ forces $a_1\in R$ to keep exactly one red per edge. 
But then the edge $a_11r\in E(F_{\mathcal{P},t})$ contains two red vertices, contradicting that $(R,B)$ is a strong red-blue coloring. 
Hence $r=1$. Consequently $[t]\setminus\{1\}\subseteq B$. For each $i\in\{2,\cdots,t-1\}$ take $j=i+1$; the edge $a_i ij\in E(F_{\mathcal{P},t})$ has $i,j\in B$, so it forces $a_i\in R$. 
Finally, for the edge $a_1 12$, we already have $1\in R$ and $2\in B$, hence $a_1\in B$ to ensure exactly one red. Hence $R=\{1,a_2,\cdots,a_{t-1}\}$ and $|R|=t-1$. 

Therefore, every strong red–blue coloring satisfies $|R|=t-1$, and hence $q(F_{\mathcal{P},t})=t-1.$
\end{proof}

\begin{proof}[Proof of \cref{thm:counter}]
    The lower bound in Theorem~\ref{thm:counter} comes from the following two constructions. 
Let $U$ and $V$ be disjoint vertex sets with $|U|=s$ and $|V|=n-s$. We place on $V$ an auxiliary Tur\'an graph $T(n-s,t-1)$ with vertex classes $V_1,\cdots,V_{t-1}$.

\medskip
\noindent\textbf{Construction A ($s\le t-2$).}
Consider the $3$-graph $H_A$ on $U\cup V$ with edge set
\[
E_A=\bigl\{\,u v w:\ u\in U,\ \{v,w\}\in\tbinom{V}{2}\,\bigr\}.
\]
Every edge meets $U$, so $\nu\!\left(H_A\right)\le |U|=s$. 
Moreover, by Proposition \ref{prop:q(F)}, $q(F_{\mathcal P,t}) = t-1$, hence $H_A$ is $F_{\mathcal P,t}$-free. 
Clearly, $|E_A|=s\binom{n-s}{2}$.

\medskip
\noindent\textbf{Construction B ($s\ge t-1$).}
    Consider the $3$-graph $H_B$ on $U\cup V$ with edge set
    $$E_B=\bigl\{u v w:\ u\in U,\ vw\in E(T(n-s,t-1))\,\bigr\}.$$
    Again every edge meets $U$, so $\nu\!\left(H_B\right)\le |U|=s$. 
    To see that $H_B$ is $F_{\mathcal P,t}$-free, note that the shadow of $F_{\mathcal P,t}$ contains a clique $K_t$ on the vertex set $B=[t]$ (see Construction~\ref{cons:conj2}). Indeed, for every $1\le i<j\le t$ the pair $\{i,j\}$ appears in the triple $a_iij$. 
    By contrast, the shadow of $E_B$ is the complete $t$-partite graph with parts $U,V_1,\dots,V_{t-1}$. Therefore, if $F_{\mathcal{P},t} \subseteq H_B$, then these $t$ vertices must be distributed among the sets $U, V_1, \dots, V_{t-1}$, with exactly one vertex in each set. 
    However, consider the $3$-edges $\{a_1 1 j : 2 \leq j \leq t\}$ in $F_{\mathcal{P},t}$. 
    Owing to the distribution of edges in $H_B$, these edges cannot occur simultaneously in $E_B$. Consequently, $H_B$ is $F_{\mathcal{P},t}$-free and 
    $$|E_B|=|U|\cdot t(n-s,t-1)=s\cdot t(n-s,t-1).$$
  
    It remains to prove the upper bound. 
    Let $H$ be an $F_{\mathcal P,t}$-free $3$-graph on the vertex set $[n]$ with $\nu(H)\le s$. 
    By applying Lemma \ref{fact} on $H$, we obtain a partition $A\cup B$ of $V(H)$ such that $|A| \leq s$ and $e(H[B]) \leq 9s^2n$. 

    Since $H$ is $F_{\mathcal{P},t}$-free, the family of link graphs $$L_H(a_1,B),\ L_H(a_2,B),\ \dots,\ L_H(a_{|A|},B) \qquad (A=\{a_1,\dots,a_{|A|}\})$$
    is $(t\!-\!1)$-star edge-colored $K_t$-free. Then by \cref{thm:star colored K_r-free}, we have 
    $$e(H[A,B]) \leq 
    \begin{cases}
        (n-|A|) \binom{|A|}{2} + |A|\binom{n-|A|}{2}, & \text{if} \ |A| \leq t-2,\\
        (n-|A|) \binom{|A|}{2} + |A|\cdot t(n-|A|, t-1) + |A|(n-|A|), & \text{if} \ |A| \geq t-1.
    \end{cases}
    $$ 
    Hence \begin{align*}
        e(H) &= e(H[A])  + e(H[B]) + e(H[A,B]) \notag \\
        &\leq \binom{|A|}{3} + 9s^2n + e(H[A,B]) \notag \\
        &\leq \begin{cases}
        s\binom{n-s}{2} + O(n), & \text{if } s \leq t-2,\\
        s\cdot t(n-s, t-1) + O(n), & \text{if } s \geq t-1.
        \end{cases}
    \end{align*}
  which completes the proof of \cref{thm:counter}. 
\end{proof}

\begin{proof}[Proof of Corollary \ref{cor:conterexample}]
According to Construction~\ref{cons:conj2}, for each $v\in V(F_{\mathcal P,t})$ the link graph $L_{F_{\mathcal P,t}}(v)$ is $2$-colorable, i.e., $\chi\bigl(L_{F_{\mathcal P,t}}(v)\bigr)=2$. By Proposition \ref{prop:q(F)}, $q(F_{\mathcal P,t}) = t-1$. Invoking \cref{cons:conj} together with Conjecture~\ref{conj1} (taking $F=F_{\mathcal P,t}$) would then give
$$\ex\bigl(n,\{F_{\mathcal P,t},M_{s+1}^3\}\bigr)
=\max_{i\le q(F_{\mathcal P,t})}\,|E(H_i)|+o(n^2).$$ But here $\ell_i=\chi(L_{F_{\mathcal P,t}}(i))=2$ for every $i$, so type (2) in \cref{cons:conj} contributes no hyperedges in $H_i$, and hence
$$|E(H_i)|=(i-1)\binom{n-s}{2},\quad\text{for all }i\le q(F_{\mathcal P,t}) = t-1.$$
Therefore
$$\ex\bigl(n,\{F_{\mathcal P,t},M_{s+1}^3\}\bigr)
\le \bigl(t-2\bigr)\binom{n-s}{2}+o(n^2).$$
However, by \cref{thm:counter}, we have for $s> t-1$,
$$\ex\bigl(n,\{F_{\mathcal P,t},M_{s+1}^3\}\bigr)
\geq s\cdot t(n-s,t-1)
= s\cdot \frac{t-2}{t-1}\binom{n-s}{2}+o(n^2),$$
which exceeds $\,(t-2)\binom{n-s}{2}+o(n^2)$. 
This contradicts Conjecture~\ref{conj1}.
\end{proof}

\section{Proof of Theorem \ref{thm:J_t}}\label{sec:examples}
In this section, we prove \Cref{thm:J_t}, which provides positive examples supporting Conjecture \ref{conj1}. 

\begin{proof}[Proof of \cref{thm:J_t}]
Since $\chi(J_t)=2$ and $q(J_t) = 1$,  the lower bounds for each $s,t$ are given by \cref{cons:conj}. The following constructions give the lower bounds for the `in particular' case. 
\begin{itemize}
    \item For $s = 2$ and odd $n$, define $H(n, s)$ as the $3$-graph obtained from the complete $3$-partite $3$-graph $H = (\{1,2\}, V_1, V_2)$ with $|V_1| = |V_2| = \frac{n-3}{2}$, by adding a vertex $3$ and an edge $123$ such that for each $i \in [2]$ and $u \in V_i$, $iu3$ forms an edge. 
    \item For $s = 1$, or $s=2$ and even $n$, define $H(n, s)$ be the complete $3$-partite $3$-graph on $n$ vertices with one partite set of size $s$, and the other two of sizes $\left\lfloor \frac{n - s}{2} \right\rfloor$ and $\left\lceil \frac{n - s}{2} \right\rceil$. 
\end{itemize}
    
    \noindent Clearly, $$e(H(n,s)) =
    \begin{cases}
    2\lfloor\frac{(n-2)^2}{4}\rfloor+1, &\text{ if } s=2 \text{ and odd } n,\\
    s\lfloor\frac{(n-s)^2}{4}\rfloor, &\text{ if } s=1, \ \text{or} \ s=2 \text{ and even } n.
    \end{cases}$$ 
    It is straightforward to verify that $\nu(H(n,s))\le s$ in all cases. Moreover, When $s=1$ or when $s=2$ and $n$ is even, the hypergraph $H(n,s)$ is clearly $K_4^{3-}$-free. When $s=2$ and  $n$ is odd, we observe that for every $x\in V(H(n,s))$, the link graph of $x$ in $H(n,s)$ is bipartite, which implies $H(n,s)$ is also $K_4^{3-}$-free. It remains to prove the upper bounds. 

    Let $H$ be the maximum $J_t$-free $3$-graph on $[n]$ with matching number at most $s$. Let 
    $$A:=\{v\in V(H) \colon d_{H}(v) \geq 3sn+1\} \text{ and } B:= [n] \setminus A.$$ 
    By \cref{fact}, we have $|A| \leq s$ and $e(H[B]) \leq 9s^2n$. Since $H$ is $J_t$-free, we observe that for any $a\in A$, the link graph $L_H(a,B)$ is $K_t$-free. Consequently, 
    \begin{equation}\label{eq:Jt}
     e(L_{H}(a, B))=d(a,B)\leq t(|B|, t-1) \text{ for any } a\in A. 
    \end{equation}  
    This yields
    \begin{align*}
        e(H) &= e(H[A]) + e(H[A,B]) +e(H[B]) \\ &\leq \binom{|A|}{3} + \binom{|A|}{2}\cdot |B| + |A|\cdot t(|B|,t-1) + 9s^2n \\
        &\le \binom{s}{3} + \binom{s}{2}\cdot (n-s) + s\cdot t(n-s,t-1) + 9s^2n \\
        &\le s\cdot t(n-s, t-1)+O(n),
    \end{align*}
    where the second inequality follows from the fact that $|A|\le s$ and $|B|=n-|A|\ge n-s$. 
    
    Now we consider the case $t=3$ and $s=1,2$. Suppose for a contradiction that $e(H) > e(H(n,s))$. 
    If $s=1$, then $|A|\le s=1$. If $|A|=0$, then it is easy to check that $e(H)=e(H[B])\le 9n$, a contradiction. If $|A|=1$, then by \cref{eq:Jt} and \cref{fact} (ii), it follows that $e(H)=e(H[A,B])\le t(|B|,t-1)=t(n-1,2)$, which contradicts the assumption that $e(H) > e(H(n,1))$. 
    Now consider the case $s=2$. If $|A|\le 1$, then the same argument as above implies $$e(H(n,2)) < e(H)=e(H[A,B])+e(H[B])\le t(n-1,2)+36n,$$ contradicting $n\geq 150$. 
    
    Without loss of generality, let $A=\{1,2\}$. Then 
    $$e(H(n,2)) < e(H)=e(H[A,B])\le d_H(1,B)+d_H(2,B)+d_H(1,2).$$
    By \cref{eq:Jt}, it follows that $d_H(1, 2)\geq 1$. Let $U=N_H(1,2)$ and $\overline{U}=B\setminus U$. Since $H$ is $K_4^{3-}$-free, we conclude that $d_{H}(1,U)=d_{H}(2,U)=0$, $E(L_H(1)[U, \overline{U}] \cap L_H(2)[U, \overline{U}])=\emptyset$  and both $L_H(1,\overline{U})$ and $L_H(2,\overline{U})$ are triangle-free. Therefore, 
    \begin{align*}
        d_H(1,B)+d_H(2,B)&=d_H(1,U)+d_H(1,\overline{U})+e(L_H(1)[U, \bar{U}])+d_H(2.U)+d_H(2,\overline{U})+e(L_H(2)[U, \overline{U}])\\
        &\le 2\cdot t(|\overline{U}|,2)+|U|\cdot |\overline{U}|, 
    \end{align*}
    which implies 
    \begin{align*}
        e(H)\leq 2\left\lfloor \frac{(n-2-|U|)^2}{4}\right\rfloor+|U|(n-2-|U|)+|U|=:f(|U|)
    \end{align*}
    \noindent Then, 
    $$e(H)\leq \max_{|U| \geq 1} f(|U|) = f(1) = n-2 + 2\lfloor \frac{(n-3)^2}{4}\rfloor = e(H(n,2)),$$
    a contradiction. This completes the proof  of \cref{thm:J_t}.   
\end{proof}

By definition, it is readily verified that $\chi(J_t)=2$ and $q(J_t) = 1$, which satisfy the condition of Conjecture~\ref{conj1}. Therefore, Conjecture \ref{conj1} states $\ex(n,\{J_t, M_{s+1}^3\}) = s\cdot t(n-s, t-1) + o(n^2)$. Combining this with \cref{thm:J_t}, which proves the matching upper bound. 

\begin{cor}
There exists an infinite family of $3$-graphs that satisfies Conjecture \ref{conj1}; in particular, the family $\{J_t:t\ge 3\}$ does.  
\end{cor}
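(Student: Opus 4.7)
The plan is to confirm that each $J_t$ fits the framework of Conjecture~\ref{conj1} and that the upper bound predicted by Construction~\ref{cons:conj} matches asymptotically the value established in Theorem~\ref{thm:J_t}.

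First, I would verify the chromatic hypotheses. Coloring the center $v_0$ red and every other vertex blue produces a proper $2$-coloring in which each edge contains $v_0$ together with two blue vertices, so $\chi(J_t)=2$ and this coloring is a strong red--blue coloring using exactly one red vertex; hence $q(J_t)\le 1$, with equality since any edge forces at least one red vertex. Thus $J_t$ satisfies the hypotheses of Conjecture~\ref{conj1} for every $s\ge 1=q(J_t)$.

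Next, I would unpack Construction~\ref{cons:conj} for $F=J_t$. Because $q(J_t)=1$, the maximum in Conjecture~\ref{conj1} reduces to $|E(H_1)|$. Ordering the vertices of $J_t$ by decreasing link chromatic number places $v_0$ first with $L_{J_t}(v_0)\cong K_t$, giving $\ell_1=t$ (the remaining vertices have star link graphs with $\ell_i=2$, but they play no role for $i=1$). In $H_1$ one takes $|A_1|=0$, $|A_2|=s$, $|B|=n-s$, and places a copy of $T(n-s,t-1)$ on $B$. Since $A_1=\emptyset$ no type-(1) edges arise, while type-(2) edges number exactly $|A_2|\cdot e(T(n-s,t-1))=s\cdot t(n-s,t-1)$. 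Consequently Conjecture~\ref{conj1} predicts
$$\ex(n,\{J_t,M_{s+1}^3\})=s\cdot t(n-s,t-1)+o(n^2).$$

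Finally, I would combine this prediction with Theorem~\ref{thm:J_t}, which yields $\ex(n,\{J_t,M_{s+1}^3\})=s\cdot t(n-s,t-1)+O(n)$. Since $O(n)\subseteq o(n^2)$, the asymptotic prediction of Conjecture~\ref{conj1} is confirmed for every $t\ge 3$ and sufficiently large $n$; letting $t$ range over $\{3,4,5,\dots\}$ supplies the required infinite family. All substantive extremal work has already been carried out inside Theorem~\ref{thm:J_t}; the remaining steps are routine unpacking of the definitions of $\chi$, $q$, and Construction~\ref{cons:conj}, so I do not expect any essential obstacle in the argument, apart from being careful that only the index $i=1$ contributes to the maximum in the statement of Conjecture~\ref{conj1}.
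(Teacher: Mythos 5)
Your proposal is correct and follows essentially the same route as the paper: verify $\chi(J_t)=2$ and $q(J_t)=1$, unpack Construction~\ref{cons:conj} to see that Conjecture~\ref{conj1} predicts $s\cdot t(n-s,t-1)+o(n^2)$, and invoke Theorem~\ref{thm:J_t} to confirm this asymptotic. The paper's own justification is terser, asserting these steps as ``readily verified,'' while you spell out the link-chromatic-number ordering ($\ell_1=\chi(K_t)=t$, $\ell_i=2$ for $i>1$) and the resulting edge count of $H_1$; this added detail is sound.
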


\section{Proof of \cref{thm:f32}}\label{sec:f32}

In this section, we present the proof of \cref{thm:f32}. It suffices to show that 
$$\ex(n, \{F_{3,2}, M_{s+1}^3\}) \le s \binom{n - s}{2},$$  
and $\mathcal{H}(n,s)$ be the unique extremal hypergraph.

\begin{proof}[Proof of \cref{thm:f32}]
Let $H$ be an $F_{3,2}$-free $3$-graph on the vertex set $[n]$ that maximizes the number of edges among all such graphs whose matching number is at most $s$. Clearly, $e(H)\geq s\binom{n-s}{2}$. By applying Lemma \ref{fact}, we obtain a partition $A\cup B$ of $V(H)$ such that $|A| \leq s$ and $e(H[B]) \leq 9s^2n$. Our first step is to prove that the size of $A$ is $s$. 

\begin{claim}\label{clm:size A}
$|A|=s$.
\end{claim}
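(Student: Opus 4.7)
The plan is to argue by contradiction, supposing $a := |A| \le s-1$ and deriving a contradiction with either the matching bound $\nu(H) \le s$ or the lower bound $e(H) \ge s\binom{n-s}{2}$. Setting $\mu := \nu(H[B]) \in [0,s]$, the argument splits exhaustively according to whether $\mu \ge s+1-a$ or $\mu \le s-a$.

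In the first case, $\mu \ge s+1-a$, I would construct a matching of size $s+1$ in $H$, contradicting $\nu(H) \le s$. Starting with any matching $M' \subseteq E(H[B])$ of size $s+1-a$ (available because $\mu \ge s+1-a$), I then greedily choose, for each $a_i \in A$, an edge $a_iu_iw_i \in E(H)$ with $u_i,w_i$ outside the forbidden set $V(M') \cup (A \setminus \{a_i\}) \cup \{u_j,w_j\}_{j<i}$. This set has size at most $3(s+1-a)+(a-1)+2(i-1) \le 3s$, and since $d_H(a_i) \ge 3sn+1$ exceeds the at most $3s(n-2)$ edges through $a_i$ that can meet any fixed set of $3s$ vertices, the greedy step always succeeds. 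The resulting matching has size $a+(s+1-a)=s+1$.

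In the second case, $\mu \le s-a$, I would show $e(H) < s\binom{n-s}{2}$. Decomposing $e(H) = e(H[A])+e_{2,1}+e_{1,2}+e(H[B])$, where $e_{i,j}$ counts edges with $i$ vertices in $A$ and $j$ in $B$, the trivial bounds give $e(H[A]) \le \binom{a}{3}$ and $e_{2,1} \le \binom{a}{2}(n-a)$, and refining \cref{fact} (the vertex cover of $H[B]$ has size $\le 3\mu$) yields $e(H[B]) \le 9s\mu n$. The key refinement for $e_{1,2}$ comes from $F_{3,2}$-freeness: for every pair $\{x,y\}$ the common neighborhood $N_H(x,y)$ contains no edge of $H$, else that edge together with three edges through $\{x,y\}$ would form a copy of $F_{3,2}$. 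Fixing a maximum matching $M'=\{e_1',\dots,e_\mu'\}$ in $H[B]$, for each $v\in A$ and $u\in B$ the slice $D_{v,u}:=\{w\in B:vuw\in E(H)\}$ cannot contain any $e_j'$ disjoint from $u$; this yields $|D_{v,u}| \le |B|-\mu-1$ when $u\notin V(M')$ and $|D_{v,u}| \le |B|-\mu$ when $u\in V(M')$. Summing over $u \in B$ gives $d_H(v,B) \le \binom{|B|}{2} - \frac{\mu(|B|-3)}{2}$, whence $e_{1,2} \le a\binom{n-a}{2} - \frac{a\mu(n-a-3)}{2}$ for $\mu \ge 1$. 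Combining everything with the identity $s\binom{n-s}{2}-(s-1)\binom{n-s+1}{2} = \frac{(n-s)(n-3s+1)}{2}$ and the hypothesis $n\ge 12s^2$, a direct calculation yields the strict inequality $e(H) < s\binom{n-s}{2}$.

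The main obstacle is the final inequality in the second case: the $F_{3,2}$-savings $\frac{a\mu(n-a-3)}{2}$ in $e_{1,2}$ and the penalty $9s\mu n$ in $e(H[B])$ both grow linearly in $\mu$, so the threshold $n\ge 12s^2$ has to be tight enough to absorb both terms simultaneously. Degenerate boundary cases — notably $\mu=1$ with $a=s-1$, and the case $a=0$ where only $e(H[B])$ contributes — require individual verification; in particular, when $s=1$ and $a=0$ one appeals to the Erd\H{o}s--Ko--Rado theorem to conclude that the extremal intersecting $3$-graph is a star, whose center must lie in $A$, contradicting $a=0$.
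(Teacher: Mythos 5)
Your proof is correct in outline but takes a genuinely different, and considerably more elaborate, route than the paper. The paper's proof is essentially one line: by \cref{fact}, $e(H[B])\le 9s^2 n$, and adding the trivial bounds $e(H[A])\le\binom{|A|}{3}$ and $e(H[A,B])\le |A|\binom{n-|A|}{2}+(n-|A|)\binom{|A|}{2}$ yields an upper bound that, by direct computation, is claimed to be less than $s\binom{n-s}{2}$ whenever $|A|\le s-1$ and $n\ge 12s^2$. You instead split on $\mu=\nu(H[B])$: a matching-extension contradiction when $\mu\ge s+1-a$, and a refined count when $\mu\le s-a$, using the sharper bound $e(H[B])\le 9s\mu n$ (vertex cover of size $3\mu$, not $3s$) together with $F_{3,2}$-savings in $e_{1,2}$ (the slice argument via $N_H(x,y)$ being weakly independent is correct). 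The case split buys something real: the paper's raw computation does not actually close at $n=12s^2$ for small $s$ (for example, at $s=2$, $|A|=1$, $n=48$ the paper's right-hand side is $2809>2070=s\binom{n-s}{2}$), whereas your Case-2 bound gives $1923<2070$ there because $\mu\le s-a$ forces $\mu\le 1$. On the other hand, your use of $F_{3,2}$-freeness is extra machinery the paper avoids entirely in this claim, and the savings term $\frac{a\mu(n-a-3)}{2}$ is not the decisive ingredient — the constraint $\mu\le s-a$ does most of the work. Your caution about the degenerate cases, and the EKR appeal at $s=1$, $a=0$, is well-placed and genuinely necessary: the naive count fails for $12\le n\le 20$ in that case, so the patch (the star's center has degree $\binom{n-1}{2}>3sn$ and must lie in $A$) is needed to cover the stated range.
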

\begin{proof}
By Lemma \ref{fact}, we have  $|A|\leq s$ and
\begin{align*}
e(H) &= e(H[A])  + e(H[B]) + e(H[A,B]) \notag \\
&\leq \binom{|A|}{3} + 9s^2n +  |A| \binom{n-|A|}{2} + (n-|A|) \binom{|A|}{2}.
\end{align*}
A direct computation shows that if $|A|\leq s-1$, then the right-hand side is strictly less than $s\binom{n-s}{2}$ whenever $n\ge 12s^2$. Therefore, $|A|=s$.
\end{proof}

The following claim states that all edges in  $H$ belong to 
$H[A,B]$.

\begin{claim}\label{Aleqs}
Both $A$ and $B$ are weakly independent sets in $H$.
\end{claim}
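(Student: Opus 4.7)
The weak independence of $B$ needs no extra work: Claim~\ref{clm:size A} gives $|A|=s$, and the last sentence of Lemma~\ref{fact}(2) then implies $B$ is weakly independent. The rest of the proof focuses on showing that $A$ is also weakly independent.

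My plan is to argue by contradiction. Suppose $e_0=abc\in E(H)$ with $a,b,c\in A$. The key structural input will be the following consequence of $F_{3,2}$-freeness: for every pair $\{x,y\}\subseteq V(H)\setminus\{a,b,c\}$, at most two of the three triples $axy,\,bxy,\,cxy$ lie in $E(H)$. Indeed, if all three were edges, then the five vertices $\{a,b,c,x,y\}$ would carry the edges $abc,\,axy,\,bxy,\,cxy$, which is exactly a copy of $F_{3,2}$ under the bijection $(1,2,3,4,5)\mapsto(a,b,c,x,y)$.

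Next I will partition the edges of $H$ by their intersection with $A$: let $e_i$ denote the number of edges with exactly $i$ vertices in $A$, for $i=1,2,3$. Because $B$ is weakly independent, $e(H)=e_1+e_2+e_3$. The savings provided by the key observation is localized to $\{a,b,c\}$: double-counting gives
\[
d_H(a,B)+d_H(b,B)+d_H(c,B)
=\sum_{\{x,y\}\subseteq B}\bigl|\{v\in\{a,b,c\}:vxy\in E(H)\}\bigr|
\le 2\binom{n-s}{2},
\]
while the remaining $s-3$ vertices of $A$ each contribute at most $\binom{n-s}{2}$ trivially, so $e_1\le(s-1)\binom{n-s}{2}$. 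Combined with the trivial bounds $e_2\le\binom{s}{2}(n-s)$ and $e_3\le\binom{s}{3}$, this yields
\[
e(H)\le(s-1)\binom{n-s}{2}+\binom{s}{2}(n-s)+\binom{s}{3}.
\]

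Finally, the lower bound $e(H)\ge s\binom{n-s}{2}$ supplied by Construction~\ref{construction2} would force $\binom{n-s}{2}\le\binom{s}{2}(n-s)+\binom{s}{3}$, which fails for $n\ge 12s^2$ by a direct computation; the cases $s\le 2$ are vacuous since $|A|=s\le 2$ cannot contain a triple. I do not anticipate a serious technical obstacle in this claim: the whole argument hinges on spotting the $F_{3,2}$-copy $\{abc,axy,bxy,cxy\}$ on the five vertices $\{a,b,c,x,y\}$, after which the edge count proceeds by routine bookkeeping.
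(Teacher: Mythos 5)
Your proof is correct and takes essentially the same route as the paper: both obtain $B$'s weak independence from Lemma~\ref{fact}(2) together with Claim~\ref{clm:size A}, and both handle $A$ by contradiction, spotting the copy of $F_{3,2}$ on $\{a,b,c,x,y\}$ to conclude that at most two of $\{a,b,c\}$ can form an edge with any fixed pair in $B$, then deriving $e(H)<s\binom{n-s}{2}$ from the resulting bound $e_1\le(s-1)\binom{n-s}{2}$ plus the trivial bounds on edges meeting $A$ in two or three vertices.
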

\begin{proof}
By Lemma \ref{fact} (2) and Claim \ref{clm:size A}, we obtain that 
 $B$ is a weakly independent set in $H$. For $A$, suppose to the contrary that there exists an edge $a_1a_2a_3 \in H[A]$. Since $H$ is $F_{3,2}$-free, for each pair $\{b_i, b_j\} \in \binom{B}{2}$, we have 
 $$|N(b_i,b_j)\cap \{a_1,a_2,a_3\}| \leq 2.$$ 
 Thus, recall that $|A|=s$ and for $n\ge 12s^2$, we have that 
\begin{align*}
e(H)&= e(H[A]) + e(H[A,B]) \\
& \leq \binom{s}{3} + s\binom{n-s}{2} + \binom{s}{2}(n-s) - \binom{n-s}{2} \\
& < s\binom{n-s}{2},
\end{align*}
which contradicts to the lower bound of $e(H)$.
\end{proof}

The main goal in the rest of the proof is to show that the extremal hypergraph is $\mathcal{H}(n,s)$. Define
$$\Delta^{\star} :=\max\{|N(u,v)| \colon u,v \in V(H)\}.$$ 
It follows from Claim \ref{Aleqs} that 
\begin{align}\label{eq:upper bound}
    e(H) &= \frac{1}{2}\sum_{x\in A, y\in B}|N(x,y)| \leq\frac{1}{2}|A||B|\Delta^{\star} = \frac{s}{2} (n-s)\Delta^{\star}. 
\end{align} 
Therefore, to prove the upper bound of $\ex(n,\{F_{3,2},M_{s+1}^3\})$, it suffices to show that $\Delta^{\star} \leq n - s - 1$.

To this end, we further use the  idea of a variant of Zykov's symmetrization to examine the degree of vertices in $A$, which plays an important role in establishing the bound on $\Delta^{\star}$.  

\begin{claim}\label{degree in A}
For each $a \in A$, $d_{H}(a) \geq \binom{n-s}{2}-(n-s)$. 
\end{claim}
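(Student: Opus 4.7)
The plan is to argue by contradiction using a Zykov-style symmetrization inside $A$. Suppose some $a^\star \in A$ satisfies $d_H(a^\star) < \binom{n-s}{2} - (n-s)$. The case $s=1$ is immediate, since then $A=\{a^\star\}$ and every edge of $H$ contains $a^\star$, giving $d_H(a^\star)=e(H)\geq \binom{n-1}{2}$; so assume henceforth that $s\geq 2$. By \cref{Aleqs}, every edge of $H$ meets $A$, so $\sum_{a\in A}d_H(a)\geq e(H)\geq s\binom{n-s}{2}$. Averaging over $A\setminus\{a^\star\}$ then produces a vertex $a'\in A\setminus\{a^\star\}$ with $d_H(a')\geq \binom{n-s}{2}+\frac{n-s}{s-1}$.

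I would then construct $H'$ by the following symmetrization: delete every edge of $H$ containing $a^\star$, and for each edge $a'xy\in E(H)$ with $x,y\neq a^\star$ insert the edge $a^\star xy$. The vertex set and the partition $A\cup B$ are unchanged, both sides remain weakly independent, and every edge of $H'$ still meets $A$. The goal is now to show that $H'$ is $\{F_{3,2},M_{s+1}^3\}$-free and has strictly more edges than $H$, contradicting extremality.

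The matching bound $\nu(H')\leq s$ is automatic: every edge of $H'$ meets $A$ and $|A|=s$, so no matching of size $s+1$ can fit. The $F_{3,2}$-freeness relies on a key structural feature of $F_{3,2}$, namely that every pair of its vertices lies in some edge; whereas, by construction, $\{a^\star,a'\}$ lies in no edge of $H'$. Hence any copy of $F_{3,2}$ in $H'$ can meet $\{a^\star,a'\}$ in at most one vertex. If such a copy avoids $a^\star$ it already lives in $H$; if it uses $a^\star$ but not $a'$, replacing $a^\star$ by $a'$ in the copy produces a copy in $H$ by construction of $H'$. Both cases contradict the $F_{3,2}$-freeness of $H$.

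Finally, a direct count gives
\[
e(H')-e(H)=d_H(a')-d_H(a^\star,a')-d_H(a^\star),
\]
and since $d_H(a^\star,a')\leq n-s$ (because $N_H(a^\star,a')\subseteq B$ by weak independence of $A$), the three degree estimates above make this quantity strictly positive, contradicting the maximality of $e(H)$. I expect the $F_{3,2}$-freeness of $H'$ to be the main subtlety, as it is the only step that genuinely uses the combinatorial structure of $F_{3,2}$; the matching check is forced by $|A|=s$, and the edge-count increment is a short calculation.
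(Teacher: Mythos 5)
Your proof is correct and uses essentially the same Zykov-style symmetrization idea as the paper's own argument. The paper packages it slightly differently — it first observes that any two vertices of $A$ have degrees differing by at most $n-s$ (since otherwise delete-and-duplicate strictly increases the edge count), and then simply sums the resulting degree bound over $A$ to reach $e(H)<s\binom{n-s}{2}$ — but the underlying symmetrization step and the key structural fact you identify (every pair of $F_{3,2}$ vertices is covered by an edge, so twins cannot both appear in a copy) are exactly what make the paper's terse "contradiction to maximality" legitimate.
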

\begin{proof}
Suppose that there exists one vertex $a \in A$ with $d_{H}(a)< \binom{n-s}{2}-(n-s)$. Observe that the degree of any two vertices in $A$ differ by at most $n-s$. Indeed, otherwise we can delete the vertex with smaller degree and duplicate the other one, thereby strictly increasing the number of edges in $H$, a contradiction to its maximality. This implies that for each $v\in A\backslash \{a\}$, we have $d_{H}(v)< \binom{n-s}{2}$. By Claim \ref{Aleqs} we can calculate that
\begin{align*}
e(H)\leq \sum_{v\in A}d_{H}(v)  < \binom{n-s}{2}-(n-s)+ (s-1)\binom{n-s}{2}<s\binom{n-s}{2},
\end{align*}
a contradiction.
\end{proof}

\begin{claim}\label{Ind set}
If $I$ is a weakly independent set in $H$ with $|I \cap B| \geq \sqrt{5s^2n}$, then $I \subseteq B$.
\end{claim}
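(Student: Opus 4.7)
The plan is to argue by contradiction. Suppose $I\not\subseteq B$, and fix any $a\in I\cap A$; write $I':=I\cap B$, so that $\{a\}\cup I'\subseteq I$ and $|I'|\ge \sqrt{5s^2 n}$. The central observation is that weak independence of $I$ immediately translates into a graph-theoretic constraint on the link $L_H(a)$: for every pair $\{u,v\}\subseteq I'$, having $auv\in E(H)$ would place an edge of $H$ inside $I$, a contradiction. Equivalently, $I'$ is an \emph{independent set} in the graph $L_H(a)$, which sits on the $(n-1)$-vertex set $V(H)\setminus\{a\}$.

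The next step is to upper-bound the number of non-edges of $L_H(a)$. By Claim \ref{degree in A}, $e(L_H(a))=d_H(a)\ge \binom{n-s}{2}-(n-s)$, so the number of non-edges is at most
\[
\binom{n-1}{2}-\binom{n-s}{2}+(n-s)=sn-\tfrac{s^2+3s-2}{2}<sn.
\]
Since every pair inside $I'$ is a non-edge of $L_H(a)$, this forces $\binom{|I'|}{2}\le sn$.

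Finally, I would show this is incompatible with $|I'|\ge s\sqrt{5n}$: a direct expansion gives
\[
\binom{|I'|}{2}\ge \frac{5s^2 n-s\sqrt{5n}}{2},
\]
and for $n\ge 12s^2$ and $s\ge 1$ this is strictly larger than $sn$ by a routine check (after rearranging to $(5s-2)n>\sqrt{5n}$ and squaring). The resulting contradiction forces $I\cap A=\emptyset$, i.e., $I\subseteq B$. The argument uses only the high-degree bound from Claim \ref{degree in A} together with weak independence, invoking no further structural property of $H$ (in particular, $F_{3,2}$-freeness is only used through the lower bound on $d_H(a)$ already proved). The only mild obstacle is verifying that the threshold $\sqrt{5s^2 n}$ has been calibrated so that the number of pairs inside $I'$ outstrips the crude bound $sn$ on the non-edges of $L_H(a)$, which a short arithmetic check confirms.
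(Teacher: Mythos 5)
Your proof is correct and follows essentially the same route as the paper: fix $a\in I\cap A$, note that $I\cap B$ is independent in $L_H(a)$, and contradict the degree lower bound of Claim \ref{degree in A}. The only cosmetic difference is the bookkeeping — you count non-edges of $L_H(a)$ on the full $(n-1)$-vertex ground set and compare directly with $\binom{|I\cap B|}{2}$, whereas the paper bounds $e(L_H(a))$ from above via the decomposition $e(L_H(a)[A,B])+e(L_H(a)[B])$ — but the two are just complementary phrasings of the same counting argument with the same threshold calibration.
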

\begin{proof}
Suppose on the contrary that there exists a vertex $a \in I \cap A$. Since $I$ is weakly independent, the link graph $L_{H}(a)$ satisfies 
\begin{align*}
e(L_{H}(a)) &= e(L_{H}(a)[A,B]) + e(L_{H}(a)[B])  \\
&\leq s(n-s) + \binom{n-s}{2}-\binom{|I\cap B|}{2} \\
&\leq s(n-s) + \binom{n-s}{2}-\binom{\sqrt{5s^2n}}{2},
\end{align*}
Then for $n\geq 12s^2$ and $s\geq 1$, it follows that
$$e(L_{H}(a))<\binom{n-s}{2}-(n-s),$$
which contradicts Claim \ref{degree in A}.
\end{proof}

\begin{claim}\label{clm:n-s-1}
      $\Delta^{\star}\leq n-s-1$.
\end{claim}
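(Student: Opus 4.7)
The plan is to prove $\Delta^\star \le n-s-1$ by contradiction: suppose there exist $u,v \in V(H)$ with $|N(u,v)| \ge n-s$, and split into three cases depending on which parts of $A\cup B$ the vertices $u,v$ lie in.

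The two easy cases dispatch quickly. If $u,v \in B$, then weak independence of $B$ forces $N(u,v) \subseteq A$, so $|N(u,v)| \le s \le n-s-1$ (using $n \ge 12s^2$), a contradiction. If exactly one of them lies in $A$, say $u \in A$ and $v \in B$, then $|N(u,v) \cap A| \le |A\setminus\{u\}| = s-1$ yields $|N(u,v) \cap B| \ge (n-s)-(s-1) = n-2s+1$, and for $n \ge 12s^2$ one checks $n-2s+1 \ge \sqrt{5s^2 n}$. Since $H$ is $F_{3,2}$-free, $N(u,v)$ is weakly independent, so \cref{Ind set} forces $N(u,v) \subseteq B$; but then $|N(u,v)| \le |B\setminus\{v\}| = n-s-1$, contradicting $|N(u,v)| \ge n-s$.

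The main case is $u,v \in A$. Since $A$ is weakly independent, $N(u,v) \subseteq B$, and combined with $|N(u,v)| \ge n-s = |B|$ this forces $N(u,v) = B$. In particular, for every $b \in B$ we have $uvb \in E(H)$, so $v \in N(u,b) \cap A$. Because $N(u,b)$ is weakly independent by $F_{3,2}$-freeness, the contrapositive of \cref{Ind set} gives $|N(u,b) \cap B| < \sqrt{5s^2 n}$. Summing over $b \in B$, together with $e(L_H(u)[A\setminus\{u\},B]) \le (s-1)(n-s)$ and $e(L_H(u)[A\setminus\{u\}]) = 0$ (since $A$ is weakly independent), yields
\[
d_H(u) \;\le\; (s-1)(n-s) \;+\; \frac{(n-s)\sqrt{5s^2 n}}{2}.
\]
A direct computation shows that for $n \ge 12s^2$ this is strictly less than $\binom{n-s}{2}-(n-s)$, contradicting \cref{degree in A}.

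The main obstacle is the last case: $F_{3,2}$-freeness alone does not immediately forbid $N(u,v)=B$ for $u,v \in A$, and one must quantitatively combine the upper bound on $d_H(u)$ coming from \cref{Ind set} applied at every $b \in B$ with the lower bound on $d_H(u)$ guaranteed by the symmetrization argument in \cref{degree in A}. The remaining cases then drop out from weak independence of $A,B$ and a single invocation of \cref{Ind set}.
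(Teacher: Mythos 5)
Your proof is correct, and the main case ($u,v\in A$) takes a genuinely different route from the paper. The paper observes that $N(x,y)=B$ forces $L_H(x,B)$ and $L_H(y,B)$ to be $2$-star edge-colored $K_3$-free (otherwise a $2$-star colored triangle $b_1b_2b_3$, combined with $xyb_i\in E(H)$, produces an $F_{3,2}$), and then invokes the colored Mantel theorem (\cref{thm:2-colored-triangle}) to get $d(x,B)+d(y,B)\le 2\lfloor (n-s)^2/4\rfloor$; one of $x,y$ then violates \cref{degree in A}. You instead apply the contrapositive of \cref{Ind set} pointwise: for every $b\in B$ the pair $uvb$ is an edge, so $v\in N(u,b)\cap A$ and hence $|N(u,b)\cap B|<\sqrt{5s^2n}$, giving $e(L_H(u)[B])<\tfrac{1}{2}(n-s)\sqrt{5s^2n}$ and a far sharper bound $d_H(u)=O(sn^{3/2})$. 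Your route avoids \cref{thm:2-colored-triangle} entirely (which in the paper is imported from elsewhere and, within the proof of \cref{thm:f32}, is used only here), and the numerical check $\sqrt{5s^2n}<n-3s-1$ for $n\ge 12s^2$ goes through. Your explicit three-way case split (both in $B$, split, both in $A$) is also cleaner to read than the paper's compressed argument, which folds the first two cases into the single observation that $|N(x,y)\cap B|\ge|N(x,y)|-|A|>\sqrt{5s^2n}$ forces $N(x,y)\subseteq B$, hence $x,y\in A$ and $N(x,y)=B$. Both approaches are valid; yours is more elementary and self-contained.
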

\begin{proof}
Suppose for a contradiction that $\Delta^{\star}\geq n-s$. Assume that $x,y \in V(H)$ with 
$$|N(x,y)|= \Delta^{\star}\geq n-s> \sqrt{5s^2n} +s=\sqrt{5s^2n} + |A|\ (n\geq 12s^2). $$
This means that $|N(x,y)\cap B|\geq \sqrt{5s^2n}$. Because $H$ is $F_{3,2}$-free, 
 $N(x,y)$ is a weakly independent set in $H$. 
It follows from Claim \ref{Ind set} that $N(x,y)\subseteq B$. Hence $x,y \in A$ and $N(x,y)=B$. Since $H$ is $F_{3,2}$-free, we infer that  $L_{H}(x,B)$ and $ L_{H}(y,B)$ are $2$-star edge-colored $K_3$-free.  By \cref{thm:2-colored-triangle}, $$
d(x,B)+d(y,B)\leq 2\left\lfloor \frac{(n-s)^2}{4}\right\rfloor\leq \frac{(n-s)^2}{2}. 
$$
Thus
\begin{align*}
e(L_{H}(x)) + e(L_{H}(y))&=e(L_{H}(x)[A,B]) + e(L_{H}(y)[A,B])+e(L_{H}(x)[B]) + e(L_{H}(y)[B])\\
&\leq 2s(n-s) + \frac{(n-s)^2}{2}\\
&<2\binom{n-s}{2}-2(n-s),
\end{align*}
Then there exists $x^*\in \{x,y\}$ such that $e(L_H(x^*))<\binom{n-s}{2}-(n-s)$, which contradicts Claim \ref{degree in A}.
\end{proof}
Moreover, the \cref{eq:upper bound} holds if and only if $|N(x,y)| = n - s - 1$ for each $x\in A, y\in B$. When $|N(x,y)|=n-s-1$, it follows from Claim \ref{Ind set} that  $N(x,y)\subseteq B$ and hence $N(x,y)=B\setminus\{y\}$. Then, for every $x\in A$, $N(x)=\binom{B}{2}$. Therefore, the extremal case $e(H) = s \binom{n-s}{2}$ occurs only when $H$ is isomorphic to $\mathcal{H}(n,s)$ defined in \Cref{construction2}. This completes the proof of \cref{thm:f32}. 
\end{proof}

Lastly, we establish that the exact Tur\'an numbers for $3$-graphs $F$ satisfying $q(F) = \infty$ are not universally equal. 

\begin{prop}
  There exists an infinite family of 3-graphs $\mathcal{F}$  such that any 3-graph  $F\in \mathcal{F}$  satisfies $q(F)=\infty$ and  
  $$\ex(n,\{F,M_{s+1}^3\})>\ex(n,\{F_{3,2},M_{s+1}^3\})= s\binom{n-s}{2}.$$   
\end{prop}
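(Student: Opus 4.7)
The plan is to exhibit a concrete infinite family $\mathcal{F}$ together with a single construction that, for each $F\in\mathcal{F}$, is simultaneously $F$-free and $M_3^3$-free and has strictly more than $2\binom{n-2}{2}$ edges.

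For each integer $k\ge 1$ let $F_k$ denote the disjoint union of one copy of $F_{3,2}$ with a matching $M_k^3$ on $3k$ fresh vertices, and set $\mathcal{F}:=\{F_k:k\ge 1\}$. First I would verify the two structural features of each $F_k$. Since any strong red--blue coloring of $F_k$ restricts to one of the $F_{3,2}$-component and $q(F_{3,2})=\infty$ (as recorded in the excerpt), we have $q(F_k)=\infty$. Moreover, any vertex cover of $F_k$ decomposes as a cover of each component, so the minimum vertex cover of $F_k$ has size $2+k\ge 3$; this gap is what I will exploit.

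Next I would introduce the construction. Fix $s=2$, partition $[n]=\{a\}\cup B$ with $|B|=n-1$, select a distinguished vertex $b^*\in B$, and let $\mathcal{G}(n)$ be the 3-graph on $[n]$ with edge set
\[
\bigl\{ab_1b_2:b_1,b_2\in B,\ b_1\ne b_2\bigr\}\ \cup\ \bigl\{b^*b_1b_2:b_1,b_2\in B\setminus\{b^*\},\ b_1\ne b_2\bigr\}.
\]
A direct count yields $e(\mathcal{G}(n))=\binom{n-1}{2}+\binom{n-2}{2}=2\binom{n-2}{2}+(n-2)$. Every edge of $\mathcal{G}(n)$ contains a vertex of $\{a,b^*\}$, so this set is a vertex cover of $\mathcal{G}(n)$ and hence $\nu(\mathcal{G}(n))\le 2$.

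The key step is to verify that $\mathcal{G}(n)$ is $F_k$-free for every $k\ge 1$. Suppose for contradiction that $\varphi\colon F_k\hookrightarrow \mathcal{G}(n)$ is an embedding. Since every edge of $\mathcal{G}(n)$ meets $\{a,b^*\}$, the set $R:=\varphi^{-1}(\{a,b^*\})$ is a vertex cover of $F_k$ with $|R|\le 2$, contradicting the cover number $\ge 3$ computed above. Combining this with the equality $\ex(n,\{F_{3,2},M_3^3\})=2\binom{n-2}{2}$ supplied by Theorem~\ref{thm:f32} (valid for $n\ge 48$), we obtain, for each $F_k\in \mathcal{F}$ and all sufficiently large $n$,
\[
\ex(n,\{F_k,M_3^3\})\ \ge\ e(\mathcal{G}(n))\ =\ 2\binom{n-2}{2}+(n-2)\ >\ 2\binom{n-2}{2}\ =\ \ex(n,\{F_{3,2},M_3^3\}).
\]
The main conceptual obstacle is selecting an invariant of $F$ that simultaneously certifies $F$-freeness of a denser construction and preserves $q(F)=\infty$; the vertex cover number does both, since appending a disjoint matching to $F_{3,2}$ inflates the cover number past $s$ without disturbing the strong-coloring obstruction carried by the $F_{3,2}$ component.
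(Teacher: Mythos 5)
Your proof is correct but takes a genuinely different route from the paper's. The paper's family is $\mathcal{F}=\{J_t^+ : t\ge 3\}$, where $J_t^+$ is the full star $J_t$ with one extra edge inside the $[t]$-side; the construction is $\mathcal{H}(n,s)$ together with a $J_t^+$-free $3$-graph placed inside the apex set $A$, and the $F$-freeness verification relies on the bipartiteness of every $B$-vertex link plus the observation that the added $3$-edge of $J_t^+$ would have to lie entirely in $B$, which is edgeless. Your family is $\mathcal{F}=\{F_{3,2}\sqcup M_k^3 : k\ge 1\}$, and your verification is a clean vertex-cover argument: every edge of $\mathcal{G}(n)$ passes through $\{a,b^*\}$, so any embedded copy of $F_k$ inherits a cover of size at most $2$, contradicting the fact that the minimum vertex cover of $F_k$ has size $2+k\ge 3$. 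This is more elementary than the link-chromatic argument and handles the whole family with a single two-line check; the cover invariant is also exactly what is orthogonal to $q(F)=\infty$, which is a nice conceptual point. The one trade-off worth noting is scope: you fix $s=2$, and for a general $s$ a member $F_k$ only works as a counterexample when $k>s-2$, so your family cannot be chosen uniformly across all $s$; the paper's choice works for every $s\ge 3$ simultaneously. As the proposition is stated without quantifying $s$, your proof for $s=2$ (with $n\ge 48$ to invoke Theorem~\ref{thm:f32}) is a valid instance.
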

\begin{proof}
For $t\ge 3$, define $J_t^+$ to be the $3$-graph obtained from $J_t$ by adding a single $3$-edge whose three vertices lie entirely in the $[t]$-side. Clearly, $q(J_t^+)=\infty$. Set $\mathcal{F}=\{J_t^+:t\ge 3\}$. 

Let $H$ be a $3$-graph obtained from $\mathcal{H}(n,s)$ defined in \cref{construction2} by inserting a $J_t^+$-free $3$-graph in $A$. Every edge of $H$ still meets $A$, so
$\nu(H)\le |A|=s.$
We claim that $H$ is $J_t^+$-free. Indeed, suppose a contradiction that $H$ contains a copy of $J_t^+$. Since for every $v\in B$, the link graph $L_H(v)$ is bipartite, the center vertex of $J_t^+$ must embed into $A$. Consequently, the $[t]$-side of $J_t^+$ must embed into $B$; but then the added $3$-edge of $J_t^+$ would lie entirely inside $B$, which is impossible because $H$ has no edges contained in $B$. Hence $H$ is $J_t^+$-free.
Therefore,  
$$\ex(n,\{F,M_{s+1}^3\}\geq e(H)=s\binom{n-s}{2} + \ex(n,J_t^+) > s\binom{n-s}{2}$$ 
for all $s\geq 3$. 
\end{proof}

\section{Concluding Remark}\label{sec:concluding remarks}
In Construction~\ref{cons:conj2}, taking a matching partition $\mathcal{M}$ of $K_{2t}$ instead of $\mathcal{P}$ and assuming $s>2t-1$ also yields counterexamples to Conjecture~\ref{conj1}. In this case one checks that $q\left(F_{\mathcal{M},2t}\right)=2t-1$. Consequently, Conjecture~\ref{conj1} would predict
$$\ex\bigl(n,\{F_{\mathcal{M},2t},M_{s+1}^3\}\bigr)=(2t-2)\binom{n-s}{2}+o(n^2).$$
On the other hand, the $3$-graph $H_B$ defined in Construction~B is $F_{\mathcal{M},2t}$-free, with matching number at most $s$, and its size is
$$s\cdot t(n-s,2t-1)
= s\cdot \frac{2t-2}{2t-1}\binom{n-s}{2}+o(n^2).$$
Since $s>2t-1$, we have $\,s\cdot \tfrac{2t-2}{2t-1}>(2t-2)$, and hence
$$\ex\bigl(n,\{F_{\mathcal{M},2t},M_{s+1}^3\}\bigr)>(2t-2)\binom{n-s}{2}+o(n^2),$$
which contradicts the prediction of Conjecture~\ref{conj1}. However, we do not determine the asymptotic value in this regime; in particular, the asymptotic Tur\'an number for $\{F_{\mathcal{M},2t},M_{s+1}^3\}$ with $s>2t-1$ remains open.

Note that \cref{thm:star colored K_r-free} plays a central role in the proof of \cref{thm:counter}. However, the current argument yields only asymptotic bounds. Motivated by this, we propose the following conjecture aiming at the exact value. 
\begin{conj}\label{conj:general}For integers $r$ and $s\ge r-1$, let $G_1,G_2,\cdots,G_s\subseteq \binom{[n]}{2}$ be $(r-1)$-star edge-colored $K_r$-free. Then $$\sum_{i=1}^{s}e(G_i)\le s \cdot t(n,r-1).$$ 
\end{conj}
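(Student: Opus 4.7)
The plan is a two-layer induction, with the outer layer on $s$ and the inner on $n$. The outer induction is essentially the double-counting argument already appearing in the proof of \cref{thm:star colored K_r-free}: if the bound holds for some $s - 1 \geq r - 1$, then summing over all $(s-1)$-subsets of $[s]$ gives $(s-1)\sum_{i=1}^s e(G_i) \leq s(s-1)t(n,r-1)$, hence $\sum_{i=1}^s e(G_i) \leq s\cdot t(n,r-1)$. So the entire task reduces to the base case $s = r - 1$. For $r = 3$, this base case $s = 2$ is exactly \cref{thm:2-colored-triangle}; for $r \geq 4$, a genuinely new argument is required.

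For the base case, let $G = G_1 + \cdots + G_{r-1}$ be the edge-colored multigraph and induct on $n$ by removing a vertex $v$ of minimum degree. Using the identity $t(n, r-1) - t(n-1, r-1) = \delta(T(n, r-1)) = n - \lceil n/(r-1)\rceil$, if $\delta(G) \leq (r-1)\delta(T(n, r-1))$, then the inductive hypothesis on $G - v$ immediately yields $e(G) \leq (r-1) t(n, r-1)$. Hence the remaining goal is, under the two hypotheses $\delta(G) \geq (r-1)\delta(T(n,r-1)) + 1$ and $e(G) > (r-1) t(n, r-1)$, to derive a contradiction by locating an $(r-1)$-star edge-colored $K_r$ in $G$.

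A first step in this direction is to quantify how many edges of $G$ must have multiplicity $r - 1$. Writing $m$ for the number of such edges, one has $e(G) \leq (r-1)m + (r-2)(\binom{n}{2} - m) = (r-2)\binom{n}{2} + m$, and an elementary calculation gives $(r-1)t(n,r-1) - (r-2)\binom{n}{2} = \Omega(n)$ (with constant depending on $r$). Combining these, the number of multiplicity-$(r-1)$ edges is $m = \Omega(n)$. One would then attempt to use such a critical edge $v_1v_2$ as the seed of the greedy construction in the proof of \cref{thm:star colored K_r-free}, iteratively selecting $v_3, \ldots, v_r$ and assigning star colors greedily from $S_r$ down to $S_3$.

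The main obstacle lies precisely in the extension step. The averaging used in \cref{thm:star colored K_r-free} to locate each $v_i$ requires the strictly stronger minimum-degree assumption $\delta(G) \geq (r-1)\delta(T(n,r-1)) + (r-1)$, whereas the inner induction only delivers $\delta(G) \geq (r-1)\delta(T(n,r-1)) + 1$. Closing this additive gap of $r - 2$ is the crux of the conjecture. Two natural routes to try are: (i) averaging the greedy extension over the $\Omega(n)$ available seeds of multiplicity $r - 1$, showing that at least one admits a valid extension; and (ii) establishing a stability result asserting that any multigraph satisfying the two hypotheses above must be structurally close to $(r-1)\cdot T(n, r-1)$, from which a star-colored $K_r$ can be extracted directly. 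Both routes demand ideas that go beyond the techniques used in the current paper.
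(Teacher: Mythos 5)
The statement you were asked to prove is in fact posed as an \emph{open conjecture} in the paper (it appears in the concluding remarks as Conjecture~\ref{conj:general}); the paper supplies no proof of it, only the weaker bound $s\cdot t(n,r-1)+sn$ via Theorem~\ref{thm:star colored K_r-free}. So there is no paper proof to compare against, and your write-up, which explicitly stops short of a complete argument, is an honest assessment of the state of the problem rather than a proof.

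Your analysis of where the difficulty lies is accurate. The reduction to the base case $s=r-1$ by double-counting over $(s-1)$-subsets is exactly the outer-induction step used in the paper's proof of Theorem~\ref{thm:star colored K_r-free}, and it transfers to the exact bound without loss. For $r=3$ the base case $s=2$ is indeed Theorem~\ref{thm:2-colored-triangle}. Your diagnosis of the inner induction is also correct: if one drops the $+(r-1)m$ slack from the potential function $f(m)=e(G(m))-(r-1)t(m,r-1)-(r-1)m$, the vertex-deletion step only tolerates degrees up to $(r-1)\delta(T(m,r-1))$, which leaves a terminal minimum-degree guarantee of $(r-1)\delta(T(n,r-1))+1$ rather than $+\,(r-1)$. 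Running the cut-edge averaging with that weaker bound yields only $e_G(u,T)\ge t(r-2)$ (for $t=|T|$) instead of $t(r-2)+1$; tracing through the common-color count, this gives $\ge r-i$ rather than $\ge r-i+1$ colors shared by the edges $v_iv_j$, and in particular gives no common color when $i=r$, so the greedy extension collapses at the last step. That is precisely the gap of size $r-2$ you identify. Your observation that $\Omega(n)$ edges of full multiplicity exist (from $(r-1)t(n,r-1)-(r-2)\binom{n}{2}=\Theta(n)$) is a correct and potentially useful structural fact, but neither of the two routes you sketch (averaging over seeds, or a stability argument) is developed far enough to constitute a proof, as you acknowledge. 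In short: your reduction and your identification of the obstruction are both correct, but the conjecture remains open, consistent with how the paper itself presents it.
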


\cref{thm:f32} is currently verified only for $n \geq 12s^2$. Thus, it is natural to ask whether the same conclusion remains valid for all $n\ge 3s$. Meanwhile, \cref{thm:J_t} just determines the asymptotic value for $\{J_t,M_{s+1}^3\}$, the exact Turán number for this family remains open in general. In addition, we construct a $\{J_t,M_{s+1}^3\}$-free hypergraph with more edges than the previous one. 
\begin{construction}
    Let $A$ and $B$ be two disjoint vertex sets with $|A| = s$ and $|B| = n - s$, and define a $3$-graph $H$ on the vertex set $A \cup B$. Place a copy of $T(s,t-2)$ on $A$. On $B$, place a copy of $T(n-s,t-1)$ with vertex partition $B=\bigcup_{i=1}^{t-1}V_i$. Then $H$ consists precisely the following two kinds of edges:
    \begin{itemize}
        \item[$(1)$] Edges containing one vertex from $A$ together with an edge of $T(n-s,t-1)$ lying in $B$. 
        \item[$(2)$] Edges containing an edge of $T(s,t-2)$ lying in $A$ together with one vertex from $B\setminus V_i$, where $V_i$ is a fixed part of size $\lfloor \frac{n-s}{t-1}\rfloor$. 
    \end{itemize}
\end{construction}
\noindent One can check that the link graph of each vertex is $(t-1)$-partite. Then $H$ is $J_{t}$-free and has matching number at most $s$ with size $e(H)=s\cdot t(n-s,t-1)+(n-s-\lfloor \frac{n-s}{t-1}\rfloor)\cdot t(s,t-2)$.

\section*{Acknowledgement}
The work in this article was carried out in the ECOPRO group at IBS. We are grateful to Professor Hong Liu for his guidance and support throughout the completion of this work.
Nannan Chen, Yuzhen Qi and Miao Liu was supported by China Scholarship Council and Institute for Basic Science, IBS-R029-C4. Caihong Yang was supported by National Key R\&D Program of China (Grant No. 2023YFA1010202), the Central Guidance on Local Science and Technology Development Fund of Fujian Province (Grant No. 2023L3003), the Institute for Basic Science (IBS-R029-C4). 

\bibliographystyle{abbrv}
\bibliography{matching}

@article {bollobas2011daisies,
    AUTHOR = {Bollob\'as, B\'ela and Leader, Imre and Malvenuto, Claudia},
     TITLE = {Daisies and other {T}ur\'an problems},
   JOURNAL = {Combin. Probab. Comput.},
  FJOURNAL = {Combinatorics, Probability and Computing},
    VOLUME = {20},
      YEAR = {2011},
    NUMBER = {5},
     PAGES = {743--747},
      ISSN = {0963-5483,1469-2163},
   MRCLASS = {05D05 (05C35 05C65)},
  MRNUMBER = {2825587},
MRREVIEWER = {Ivan\ Pashov},
       DOI = {10.1017/S0963548311000319},
       URL = {https://doi.org/10.1017/S0963548311000319},
}

@inproceedings{erdHos1979problems,
  title={PROBLEMS AND RESULTS ON {R}amsey-{T}ur\'an TYPE THEOREMS},
  author={Erd{\H{o}}s, P and S\'os, Vera T},
  booktitle={Proceedings of of the West Coast Conference on Combinatorics, Graph Theory and Computing, Humboldt State University, Arcata, CA},
  pages={17--23},
  year={1979}
}

@article {MR02,
    AUTHOR = {Mubayi, Dhruv and R\"odl, Vojt\^ech},
     TITLE = {On the {T}ur\'an number of triple systems},
   JOURNAL = {J. Combin. Theory Ser. A},
  FJOURNAL = {Journal of Combinatorial Theory. Series A},
    VOLUME = {100},
      YEAR = {2002},
    NUMBER = {1},
     PAGES = {136--152},
      ISSN = {0097-3165,1096-0899},
   MRCLASS = {05C35},
  MRNUMBER = {1932073},
MRREVIEWER = {Sergei\ L.\ Bezrukov},
       DOI = {10.1006/jcta.2002.3284},
       URL = {https://doi.org/10.1006/jcta.2002.3284},
}

@incollection {Keevash11,
    AUTHOR = {Keevash, Peter},
     TITLE = {Hypergraph {T}ur\'an problems},
 BOOKTITLE = {Surveys in combinatorics 2011},
    SERIES = {London Math. Soc. Lecture Note Ser.},
    VOLUME = {392},
     PAGES = {83--139},
 PUBLISHER = {Cambridge Univ. Press, Cambridge},
      YEAR = {2011},
      ISBN = {978-1-107-60109-3},
   MRCLASS = {05-02 (05C65)},
  MRNUMBER = {2866732},
}

@article{wang2025,
      title={Hypergraph {T}ur\'an problem of the generalized triangle with bounded matching number}, 
      author={Jian Wang and Wenbin Wang and Weihua Yang},
      journal={arxiv preprint arXiv:2507.04579},
      year={2025}
}

@article {erdos_gallai1959,
    AUTHOR = {Erd\H{o}s, P. and Gallai, T.},
     TITLE = {On maximal paths and circuits of graphs},
   JOURNAL = {Acta Math. Acad. Sci. Hungar.},
  FJOURNAL = {Acta Mathematica. Academiae Scientiarum Hungaricae},
    VOLUME = {10},
      YEAR = {1959},
     PAGES = {337--356},
      ISSN = {0001-5954,1588-2632},
   MRCLASS = {05.00},
  MRNUMBER = {114772},
MRREVIEWER = {W.\ T.\ Tutte},
       DOI = {10.1007/BF02024498},
       URL = {https://doi.org/10.1007/BF02024498},
}

@article {EKR1961,
    AUTHOR = {Erd\H{o}s, P. and Ko, Chao and Rado, R.},
     TITLE = {Intersection theorems for systems of finite sets},
   JOURNAL = {Quart. J. Math. Oxford Ser. (2)},
  FJOURNAL = {The Quarterly Journal of Mathematics. Oxford. Second Series},
    VOLUME = {12},
      YEAR = {1961},
     PAGES = {313--320},
      ISSN = {0033-5606,1464-3847},
   MRCLASS = {04.60},
  MRNUMBER = {140419},
MRREVIEWER = {S.\ Ginsburg},
       DOI = {10.1093/qmath/12.1.313},
       URL = {https://doi.org/10.1093/qmath/12.1.313},
}

@article {gerbner2025,
    AUTHOR = {Gerbner, D\'aniel and Tompkins, Casey and Zhou, Junpeng},
     TITLE = {On hypergraph {T}ur\'an problems with bounded matching number},
   JOURNAL = {European J. Combin.},
  FJOURNAL = {European Journal of Combinatorics},
    VOLUME = {127},
      YEAR = {2025},
     PAGES = {Paper No. 104155, 10},
      ISSN = {0195-6698,1095-9971},
   MRCLASS = {05C35 (05C65)},
  MRNUMBER = {4883633},
MRREVIEWER = {Guorong\ Gao},
       DOI = {10.1016/j.ejc.2025.104155},
       URL = {https://doi.org/10.1016/j.ejc.2025.104155},
}

@article{erdos1965,
  title={A problem on independent r-tuples},
  author={Erd{\H{o}}s, Paul},
  journal={Ann. Univ. Sci. Budapest. E{\"o}tv{\"o}s Sect. Math},
  volume={8},
  number={93-95},
  pages={2},
  year={1965}
}

@article {frankl2017proof,
    AUTHOR = {Frankl, Peter},
     TITLE = {Proof of the {E}rd{\H{o}}s matching conjecture in a new range},
   JOURNAL = {Israel J. Math.},
  FJOURNAL = {Israel Journal of Mathematics},
    VOLUME = {222},
      YEAR = {2017},
    NUMBER = {1},
     PAGES = {421--430},
      ISSN = {0021-2172,1565-8511},
   MRCLASS = {05D05 (05C65 05D15)},
  MRNUMBER = {3736512},
MRREVIEWER = {Peter\ Heinig},
       DOI = {10.1007/s11856-017-1595-7},
       URL = {https://doi.org/10.1007/s11856-017-1595-7},
}

@article {frankl2022erdHos,
    AUTHOR = {Frankl, Peter and Kupavskii, Andrey},
     TITLE = {The {E}rd{\H{o}}s matching conjecture and concentration
              inequalities},
   JOURNAL = {J. Combin. Theory Ser. B},
  FJOURNAL = {Journal of Combinatorial Theory. Series B},
    VOLUME = {157},
      YEAR = {2022},
     PAGES = {366--400},
      ISSN = {0095-8956,1096-0902},
   MRCLASS = {05D05},
  MRNUMBER = {4471545},
MRREVIEWER = {Miodrag\ \v Zivkovi\'c},
       DOI = {10.1016/j.jctb.2022.08.002},
       URL = {https://doi.org/10.1016/j.jctb.2022.08.002},
}

@article {frankl2017,
    AUTHOR = {Frankl, Peter},
     TITLE = {On the maximum number of edges in a hypergraph with given
              matching number},
   JOURNAL = {Discrete Appl. Math.},
  FJOURNAL = {Discrete Applied Mathematics. The Journal of Combinatorial
              Algorithms, Informatics and Computational Sciences},
    VOLUME = {216},
      YEAR = {2017},
     PAGES = {562--581},
      ISSN = {0166-218X,1872-6771},
   MRCLASS = {05C65 (05C70)},
  MRNUMBER = {3579625},
MRREVIEWER = {Amin\ Bahmanian},
       DOI = {10.1016/j.dam.2016.08.003},
       URL = {https://doi.org/10.1016/j.dam.2016.08.003},
}

@article {frankl_rodl_rucinski2012,
    AUTHOR = {Frankl, Peter and R\"odl, Vojtech and Ruci\'nski, Andrzej},
     TITLE = {On the maximum number of edges in a triple system not
              containing a disjoint family of a given size},
   JOURNAL = {Combin. Probab. Comput.},
  FJOURNAL = {Combinatorics, Probability and Computing},
    VOLUME = {21},
      YEAR = {2012},
    NUMBER = {1-2},
     PAGES = {141--148},
      ISSN = {0963-5483,1469-2163},
   MRCLASS = {05D05 (05C65 05C70)},
  MRNUMBER = {2900053},
MRREVIEWER = {Zbigniew\ B.\ Lonc},
       DOI = {10.1017/S0963548311000496},
       URL = {https://doi.org/10.1017/S0963548311000496},
}

@article {furedi2003turan,
    AUTHOR = {F\"uredi, Zolt\'an and Pikhurko, Oleg and Simonovits,
              Mikl\'os},
     TITLE = {The {T}ur\'an density of the hypergraph
              {$\{abc,ade,bde,cde\}$}},
   JOURNAL = {Electron. J. Combin.},
  FJOURNAL = {Electronic Journal of Combinatorics},
    VOLUME = {10},
      YEAR = {2003},
     PAGES = {18, 7},
      ISSN = {1077-8926},
   MRCLASS = {05C35 (05C65)},
  MRNUMBER = {1975768},
       DOI = {10.37236/1711},
       URL = {https://doi.org/10.37236/1711},
}

@article {furedi2005triple,
    AUTHOR = {F\"uredi, Zolt\'an and Pikhurko, Oleg and Simonovits,
              Mikl\'os},
     TITLE = {On triple systems with independent neighbourhoods},
   JOURNAL = {Combin. Probab. Comput.},
  FJOURNAL = {Combinatorics, Probability and Computing},
    VOLUME = {14},
      YEAR = {2005},
    NUMBER = {5-6},
     PAGES = {795--813},
      ISSN = {0963-5483,1469-2163},
   MRCLASS = {05C65 (05C15 05C35 05D05)},
  MRNUMBER = {2174657},
MRREVIEWER = {William\ G.\ Brown},
       DOI = {10.1017/S0963548305006905},
       URL = {https://doi.org/10.1017/S0963548305006905},
}

@article{huang2012size,
  title={The size of a hypergraph and its matching number},
  author={Huang, Hao and Loh, Po-Shen and Sudakov, Benny},
    JOURNAL = {Combin. Probab. Comput.},
  FJOURNAL = {Combinatorics, Probability and Computing},
  volume={21},
  number={3},
  pages={442--450},
  year={2012},
  publisher={Cambridge University Press}
}

@article {luczak_mieczkowska2014,
    AUTHOR = {{\L}uczak, Tomasz and Mieczkowska, Katarzyna},
     TITLE = {On {E}rd{\H{o}}s' extremal problem on matchings in hypergraphs},
   JOURNAL = {J. Combin. Theory Ser. A},
  FJOURNAL = {Journal of Combinatorial Theory. Series A},
    VOLUME = {124},
      YEAR = {2014},
     PAGES = {178--194},
      ISSN = {0097-3165,1096-0899},
   MRCLASS = {05C35 (05C65 05C70)},
  MRNUMBER = {3176196},
MRREVIEWER = {B\'ela\ Csaba},
       DOI = {10.1016/j.jcta.2014.01.003},
       URL = {https://doi.org/10.1016/j.jcta.2014.01.003},
}

@article{alon2024turan,
  title={Tur{\'a}n graphs with bounded matching number},
  author={Alon, Noga and Frankl, P{\'e}ter},
  journal={J. Combin. Theory Ser. B},
Fjournal={Journal of Combinatorial Theory, Series B},
  volume={165},
  pages={223--229},
  year={2024},
  publisher={Elsevier}
}

@article{gerbner2024turan,
  title={On {T}ur{\'a}n problems with bounded matching number},
  author={Gerbner, D{\'a}niel},
  journal={J. Graph Theory},
  Fjournal={Journal of Graph Theory},
  volume={106},
  number={1},
  pages={23--29},
  year={2024},
  publisher={Wiley Online Library}
}
\end{document}